\documentclass{amsart}
\usepackage{amsfonts}
\usepackage{mathrsfs}
\usepackage{graphicx}
\usepackage[abs]{overpic}
\usepackage{geometry}
\geometry{left=3cm,right=3cm,top=2.5cm,bottom=2.5cm}

\newtheorem{theorem}{Theorem}[section]
\newtheorem{lemma}[theorem]{Lemma}

\newtheorem{proposition}[theorem]{Proposition}

\theoremstyle{definition}

\newcommand{\R}{\mathbb{R}}

\newcommand{\Z}{\mathbb{Z}}
\newcommand{\tb}{\mathrm{tb}}

\newcommand{\rot}{\mathrm{rot}}

\newcommand{\tr}{\mathrm{tr}}

\theoremstyle{remark}
\newtheorem{remark}[theorem]{Remark}

\numberwithin{equation}{section}



\begin{document}

\title{Strong symplectic fillability of contact torus bundles}

\author{Fan Ding}
\address{School of Mathematical Sciences and LMAM\\ Peking University\\ Beijing
100871, China}
\email{dingfan@math.pku.edu.cn}

\author{Youlin Li}
\address{School of Mathematical Sciences \\Shanghai Jiao Tong University\\
Shanghai 200240, China}
\email{liyoulin@sjtu.edu.cn}



\begin{abstract}
In this paper, we study strong symplectic fillability and Stein fillability of some tight contact structures on negative parabolic and negative hyperbolic torus bundles over the
circle. For the universally tight contact structure with twisting $\pi$ in $S^1$-direction on a negative parabolic torus bundle, we completely determine its strong symplectic
fillability and Stein fillability. For the universally tight contact structure with twisting $\pi$ in $S^1$-direction on a negative hyperbolic torus bundle, we give a necessary
condition for it being strongly symplectically fillable. For the virtually overtwisted tight contact structure on the negative parabolic torus bundle with monodromy $-T^n$ ($n<0$), we prove that it is Stein fillable. By the way, we give a partial answer to a conjecture of Golla and Lisca.
\end{abstract}

\maketitle

\section{Introduction}

Tight contact structures on torus bundles are classified up to isotopy, see \cite{G} and \cite {H}.  The study of symplectic fillability and Stein fillability of contact torus bundles has been conducted in the past decade.  Symplectic fillability and Stein fillability of contact elliptic,  positive hyperbolic,  and positive parabolic torus bundles have been completely determined.  See \cite{E}, \cite{DG}, \cite{EtH1}, \cite{LiS}, \cite{Ga3}, \cite{V} and \cite{BO}. In this paper, we focus on strong symplectic fillability and Stein fillability of certain contact negative parabolic and negative hyperbolic torus bundles.

If $A\in SL(2,\Z )$, let $M_A$ denote the $T^2$-bundle over $S^1$ with monodromy $A$. That is, $M_A$ is obtained from $T^2\times I=\R^2/\Z^2\times [0,1]$, with
coordinates $(\mathbf{x},t)=(\left (\begin{array}{l} x \\ y\end{array}\right ) ,t)$ by identifying the two ends via the map $A: T^2\times \{ 1\}\to T^2\times \{ 0\}$, where
$(\mathbf{x},1)\mapsto (A\mathbf{x},0)$. Two torus bundles $M_A$ and $M_B$ are orientation-preserving diffeomorphic  if and only if $A$ is conjugate in $SL(2,\Z )$ to $B$ or $JB^{-1}J^{-1}$,
where $J=\left(
\begin{array}{cc} 0 & 1  \\ 1 & 0
\end{array} \right) $ (cf. \cite[Lemma~6.2]{N}).
If $|\tr(A)|<2$ (resp. $|\tr(A)|=2$ or $|\tr(A)|>2$), then  $A$ and the torus bundle $M_A$ are called  elliptic (resp. parabolic or hyperbolic).  If $\tr (A)>0$ (resp. $\tr(A)<0$), then $A$ and the torus bundle $M_A$ are called  positive (resp.  negative).

Let $\phi:\R\to\R$ be a smooth function with strictly positive derivative. The $1$-form
$$\sin\phi (t)\mathrm{d}x+\cos\phi (t)\mathrm{d}y,\ (x,y,t)\in\R^3,$$
defines a contact structure on $\R^3$. This contact structure descends to a contact structure on $T^2\times\R=(\R^2/\Z^2)\times\R$ which
we denote by $\tilde{\zeta}(\phi)$.

For each $A\in SL(2,\Z)$, $M_A$ is the quotient of $T^2\times\R=(\R^2/\Z^2)\times\R$ with coordinates
$(\mathbf{x},t)=(\left (\begin{array}{l} x \\ y\end{array}\right ) ,t)$ by the transformation $(\mathbf{x},t)\mapsto (A\mathbf{x},t-1)$.

For each $\theta\in\R$, let $\Delta_{\theta}$ denote the ray
$$\left\{ \left(\begin{array}{l} s\cos\theta \\ -s\sin\theta \end{array}\right) :s\ge 0\right\}\subset\R^2.$$
If $A(\Delta_{\phi(t)})=\Delta_{\phi(t-1)}$ for all $t\in\R$, then the contact structure
$\tilde{\zeta}(\phi)$ on $T^2\times\R$ is invariant under the transformation
$(\mathbf{x},t)\mapsto (A\mathbf{x},t-1)$ and thus descends to a contact structure on $M_A$ which we denote
by $\zeta(\phi)$. By \cite[Theorem 1]{DG}, $\zeta(\phi)$ is weakly symplectically fillable.

Let $m$ denote the integer satisfying
$$m\pi\le\sup_{t\in\R}(\phi(t+1)-\phi(t))<(m+1)\pi.$$
Up to fiber preserving isotopy, the contact structure $\zeta(\phi)$ on $M_A$ depends only on $m$ when $m\ge 1$.
This is the universally tight contact structure on $M_A$ with twisting $m\pi$ in $S^1$-direction (see \cite[Theorem 0.1]{H}).
If $A$ is negative parabolic or negative hyperbolic, the set of possible values for $m$ is
the set of positive odd numbers and the contact structure $\zeta(\phi)$ on $M_A$ with the corresponding $m=1$ is denoted by $\xi_A$. The contact structure $\zeta(\phi)$ on $M_A$ with the corresponding $m\ge 3$
has positive Giroux torsion and is not strongly symplectically fillable due to \cite[Corollary 3]{Ga3}.

Let
 $S=\left(
 \begin{array}{cc}   0 & 1 \\  -1 & 0
 \end{array}
\right)  $, and $T=\left(
  \begin{array}{cc}   1 & 1 \\  0 & 1
   \end{array}
\right)$.

 For $n\in\Z$, $M_{-T^n}$, the torus bundle with monodromy $-T^n$, is also denoted by $M_n$. Then $M_{n}, n\in\Z$, constitute all negative parabolic torus bundles.
 The contact structure $\xi_{-T^n}$ on $M_n$ is also denoted by $\xi_n$. When $n\geq -3$, $\xi_n$ is Stein fillable by \cite{V}. In \cite{GoLi1}, Golla and Lisca constructed a
 strongly symplectically  fillable contact structure on $M_n$ for each $n\ge -4$. We first claim that for $-4\le n\le -1$, the strongly symplectically fillable contact structure on $M_n$ they constructed is (contactomorphic to) $\xi_n$ (see Lemma~\ref{utight}).  Thus $\xi_{-4}$ is strongly symplectically fillable.
 In fact, $\xi_{-4}$ is Stein fillable (see Proposition~\ref{Stfillable}). For $n\le -5$, we have the following result.

 \begin{theorem}\label{mainthm}
If $n\leq -5$, then $\xi_{n}$ is not strongly symplectically fillable.
\end{theorem}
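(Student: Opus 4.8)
The plan is to argue by contradiction. Suppose that for some $n\le -5$ the contact manifold $(M_n,\xi_n)$ admits a strong symplectic filling $(W,\omega)$. The strategy is to glue a fixed symplectic cap onto $W$, produce a closed symplectic $4$-manifold, and show that its global structure is incompatible with the parameter $n\le -5$. Concretely, I would first construct a concave symplectic cap $(C,\omega_C)$ having $(M_n,\xi_n)$ as its concave boundary, tailored to the universally tight structure with twisting $\pi$. Because $M_n$ is a negative parabolic torus bundle and $\xi_n$ is governed by the half-turn rotation of the contact planes in the $S^1$-direction, the cap should be assembled from an explicit configuration of symplectic surfaces---at least one of which is a \emph{sphere of non-negative self-intersection}---whose self-intersections and framings are dictated by $n$. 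Gluing gives a closed symplectic manifold $(X,\Omega)=(W\cup_{M_n}C,\Omega)$.

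The structural heart of the argument is McDuff's theorem: a closed symplectic $4$-manifold containing a symplectically embedded sphere of non-negative square is a blow-up of a rational or ruled surface. Hence $X$ is rational or ruled, and in particular $b_2^+(X)=1$. Since the cap $C$ is built to contribute the entire positive part of the intersection form (its distinguished symplectic sphere has non-negative square), it follows that $b_2^+(W)=0$; that is, the filling $W$ is negative semi-definite. Together with Novikov additivity $\sigma(X)=\sigma(W)+\sigma(C)$ and $\chi(X)=\chi(W)+\chi(C)$ (using $\chi(M_n)=0$), this pins down the homological type of $X$ in terms of $\chi(W)$ and $\sigma(W)$.

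The final step is the numerical bookkeeping. The adjunction equalities for the symplectic surfaces inside $C$, combined with the rational/ruled classification and the constraint $b_2^+(W)=0$, force the negative-definite lattice $(H_2(W),Q_W)$ to embed into the (diagonalizable) intersection form of the ambient rational/ruled surface in a way compatible with the cap's configuration. By Donaldson's theorem this embedding must be into a standard diagonal lattice, and the resulting continued-fraction and lattice-embedding combinatorics are controlled by $n$. For $-4\le n$ the constraints are satisfiable, consistent with the Golla--Lisca fillings which realize $\xi_n$ by Lemma~\ref{utight}; for $n\le -5$ no compatible embedding exists, which is the desired contradiction.

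I expect the main obstacle to be twofold. First, one must construct the cap so that its concave contact boundary is \emph{exactly} $\xi_n$, the universally tight structure with twisting $\pi$, rather than some other tight or overtwisted structure; matching the $S^1$-rotation of the contact planes to the framings of the capping surfaces is the most delicate geometric point. Second, isolating the cutoff precisely at $n=-5$ requires a careful Diophantine analysis that tracks intersection numbers through all possible blow-up configurations of the rational/ruled surface and rules out every exceptional-sphere arrangement; this is where the specific value $-5$, rather than another integer, must emerge.
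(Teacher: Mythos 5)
There is a genuine gap, and it sits exactly where you flag the ``main obstacle'': your argument requires a concave symplectic cap for $(M_n,\xi_n)$ itself, with an explicitly controlled configuration of symplectic spheres, for every $n\le -5$ --- and no such cap is supplied or known. The only explicit caps adapted to these universally tight structures are the Golla--Lisca divisor neighbourhoods, built from the proper transform of a line and a conic in $\mathbb{CP}^2$ blown up at $5+n$ points; that construction makes sense precisely when $n\ge -4$ and breaks down for $n\le -5$ (one cannot blow up a negative number of points). Abstract caps always exist, but without explicit control of the cap's intersection lattice and adjunction data your ``numerical bookkeeping'' has no starting point. Moreover, even granting a suitable cap, a Donaldson-type lattice-embedding obstruction cannot by itself prove non-fillability: a hypothetical filling with $b_2=0$ embeds trivially into any standard diagonal lattice, so the embedding combinatorics alone can never ``rule out every arrangement.'' In the paper the cutoff at $n=-5$ does not emerge from Diophantine analysis at all; it emerges from a count of $(-1)$-framed handles.

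The paper's actual route reverses your geometry: instead of capping $(M_n,\xi_n)$, it builds a Stein cobordism \emph{upward} from it. Attaching $-n-4$ Weinstein $2$-handles along parallel Legendrian copies of $\mu\times\{p\}$ in a pre-Lagrangian fiber torus (using \cite[Proposition 2.1]{EtH2} and Proposition~\ref{Lsurgery}) yields a Stein cobordism $W$ from $(M_n,\xi_n)$ to $(M_{-4},\xi_{-4})$, and Lemma~\ref{euler} computes $b_2^-(W)=-n-4$, $b_2^+(W)=0$, because each handle caps an annulus bounding twice the attaching curve and produces a class of square $-4$. If $W_n$ were a (without loss of generality minimal) strong filling of $(M_n,\xi_n)$, then by Etnyre's Proposition~\ref{minimal} the union $W_n\cup W$ would be a \emph{minimal} strong filling of $(M_{-4},\xi_{-4})$; by \cite[Theorem 3.5]{GoLi1} (together with Lemma~\ref{utight}, which identifies the Golla--Lisca structure on $M_{-4}$ with $\xi_{-4}$) every minimal filling of $(M_{-4},\xi_{-4})$ has $b_2^-=0$, whence Lemma~\ref{b2} forces $b_2^-(W)=0$ --- contradicting $b_2^-(W)=-n-4\ge 1$ for $n\le -5$. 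Thus the McDuff/closed-manifold input you invoke does appear, but only once, hidden inside the Golla--Lisca classification at the single boundary case $n=-4$, where the divisor cap genuinely exists; the passage from arbitrary $n\le -5$ down to that case is the missing idea in your proposal.
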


\begin{remark}
The negative parabolic torus bundle $M_n$ can be considered as a non-orientable $S^{1}$-bundle over the Klein bottle.  The contact structure $\xi_n$ on $M_n$ is transverse to the $S^{1}$-fibers away from a single torus. By Theorem~\ref{mainthm}, $(M_n,\xi_n)$, $n\le -5$, are examples of contact $3$-manifolds without Giroux torsion that are weakly but
 not strongly symplectically fillable. Niederkr\"{u}ger and Wendl constructed such examples by considering $S^{1}$-invariant contact structures on $S^{1}\times \Sigma$ with $\Sigma$ a closed oriented surface of genus at least $2$ (see \cite[Corollary 5]{NW}).
\end{remark}

\begin{remark}
Let $P_n$ denote the positive parabolic torus bundle with monodromy $T^n$ and $\eta_n$ denote the universally tight contact structure on $P_n$ with twisting $2\pi$ in $S^1$-direction. $P_n$ can be considered as an oriented $S^1$-bundle over the torus with Euler number $n$, and $\eta_n$ is transverse to the $S^1$-fibers away from  two
parallel tori. If $n\ge 0$, then $\eta_n$ is Stein fillable since it can be obtained from $\eta_0$ by Legendrian surgery (see \cite[Proposition 13]{DG} and its
proof). If $n<0$, then $\eta_n$ is not strongly symplectically fillable since it has positive Giroux torsion.
\end{remark} 

According to \cite[Theorem 0.1]{H},  for each $n<0$, there is a unique, up to isotopy, virtually overtwisted tight contact structure on $M_n$. We denote this contact structure on $M_n$ by $\xi_n'$.
We obtain the following.

\begin{proposition}\label{virt.over.}
If $n<0$, then $\xi'_n$ is Stein fillable.
\end{proposition}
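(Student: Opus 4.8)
The plan is to realize $(M_n,\xi'_n)$, for every $n<0$, as the result of Legendrian surgery on an explicit Legendrian link in a Stein fillable contact manifold, and to inherit Stein fillability from the ambient manifold. Since Legendrian surgery amounts to attaching Weinstein $2$-handles and therefore preserves Stein fillability, and since the standard contact structure on $S^3$ (or on $\#^k(S^1\times S^2)$) bounds a Stein domain, it suffices to produce such a diagram whose surgered contact manifold is contactomorphic to $(M_n,\xi'_n)$.

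First I would fix a convex torus decomposition of $M_n$ along the $S^1$-direction, writing $M_n$ as $T^2\times I$ reglued by $-T^n$, and record the sign sequence of the basic slices that, by \cite{H}, distinguishes the virtually overtwisted $\xi'_n$ from the universally tight $\xi_n$. The key structural point is that the mixed signs characterizing $\xi'_n$ permit a positive handle decomposition that the all-equal signs of $\xi_n$ do not. Concretely, I expect $\xi'_n$ to be supported by an open book whose monodromy is a product of right-handed Dehn twists along curves of several different slopes, equivalently a surgery diagram in which every component is a contact $(-1)$-surgery, even though the bundle monodromy $-T^n$ is negative. This is dual to Theorem~\ref{mainthm}: the universally tight $\xi_n$ admits no such positive presentation for $n\le -5$, whereas the virtually overtwisted $\xi'_n$ does for all $n<0$.

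Next I would turn this data into an honest diagram. Starting from a standard smooth surgery presentation of the negative parabolic bundle $M_n$, I would promote it to a contact surgery diagram, choosing the stabilizations of the Legendrian knots (equivalently, their rotation numbers) so as to reproduce the virtually overtwisted sign pattern, and then use handle slides and cancellations to arrange that all surgery coefficients are contact $(-1)$. Carrying this out uniformly in $n$ --- either directly, or by an induction in which a single Legendrian surgery passes from one member of the family to the next while staying in the virtually overtwisted class --- yields a Stein fillable contact structure on $M_n$ for each $n<0$.

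The main obstacle is identification: proving that the contact structure produced by the diagram is exactly $\xi'_n$, and not $\xi_n$ or an overtwisted structure. Tightness is automatic once a Stein filling is in hand, by Gromov--Eliashberg, so by \cite{H} it remains only to place the structure in the correct isotopy class among the tight ones. Since $\xi_n$ and $\xi'_n$ may well be homotopic as plane fields, the homotopy invariant $d_3$ need not separate them; I would instead read the signs of the basic slices directly off the convex tori appearing in the surgery and match them to Honda's normal form, and independently confirm virtual overtwistedness by pulling the structure back to a suitable finite cover of $M_n$ and exhibiting an overtwisted disk there. As a consistency check, the same bookkeeping should show that the all-equal-sign variant of the diagram fails to yield a tight, fillable structure for $n\le -5$, in agreement with Theorem~\ref{mainthm}.
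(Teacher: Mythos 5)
Your first half --- presenting $M_n$ by a smooth surgery diagram, promoting it to a Legendrian diagram in standard form, choosing stabilizations, and attaching Weinstein $2$-handles to get a Stein filling of \emph{some} contact structure $\xi$ on $M_n$ --- is exactly the paper's construction: there, $-T^n$ is conjugated to $(-S)T^n(-S)$, the resulting framed link (via \cite{KM}) is put in standard form, and the $\tb=1$ knot $K_0$ is stabilized $-n$ times before Legendrian surgery. You also correctly observe that tightness of $\xi$ is then automatic, so by Honda's classification \cite{H} the whole problem reduces to ruling out the universally tight possibilities. But that identification step, which you yourself flag as ``the main obstacle,'' is where your proposal has a genuine gap: neither of your two suggested mechanisms is carried out, and the second one would likely fail as stated. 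Reading the basic-slice signs ``directly off the convex tori appearing in the surgery'' is precisely the hard content of such a proof, and you give no procedure for locating those convex tori in the surgered manifold or for matching them to Honda's normal form; note also that you must exclude not just $\xi_n$ but \emph{every} universally tight structure in Honda's list, including the minimal-twisting ones given by $\mu'$ or $(\mu',\pm)$, which your discussion of ``$\xi_n$ versus $\xi'_n$'' does not address.

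More seriously, your plan to ``confirm virtual overtwistedness by pulling the structure back to a suitable finite cover and exhibiting an overtwisted disk there'' points in the wrong direction. The natural cover here is the double cover $\widetilde{M_n}$ unwrapping the base circle (a $T^2$-bundle with monodromy $T^{2n}$), and the Stein structure on the filling $N$ \emph{lifts} to the corresponding double cover $\tilde N$ with $\partial\tilde N=\widetilde{M_n}$ (this requires a check, Lemma~\ref{doublecover2}, that the cover of $M_n$ induced by $\tilde N$ is the fiberwise one); so the lift of $\xi$ to this cover is Stein fillable, hence \emph{tight}, and contains no overtwisted disk. Exhibiting an overtwisted disk would require identifying a deeper cover and producing the disk explicitly, for which you offer no route. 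The paper's argument runs in exactly the opposite direction and never exhibits overtwistedness: any universally tight structure on $M_n$ lifts to $\widetilde{M_n}$ as a universally tight structure with twisting $\beta_{S^1}\ge 2\pi$, which for $n<0$ has positive Giroux torsion and hence admits no strong symplectic filling by \cite[Corollary 3]{Ga3} (Lemma~\ref{doublecover1}); since the lift of the constructed $\xi$ \emph{is} Stein fillable, $\xi$ cannot be universally tight, and Honda's uniqueness of the virtually overtwisted structure forces $\xi=\xi'_n$. Without this elimination-by-fillability argument (or an actual execution of your basic-slice bookkeeping), your proof is incomplete at its crucial step.
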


Given $(d_1,\ldots,d_k)\in \Z^k,k\ge 1$, we define $$A(d_1,\ldots,d_k):=T^{-d_{k}}S\cdots T^{-d_{1}}S=\left( \begin{array}{ll} d_k & 1 \\ -1 & 0 \end{array}\right)\cdots\left( \begin{array}{ll} d_1 & 1 \\ -1 & 0
\end{array}\right)\in SL(2,\Z ).$$  By \cite[proposition~6.3]{N}, for $A\in SL(2,\Z)$, the torus bundle $M_{A}$ is negative hyperbolic if and only if $A$ is conjugate in $SL(2,\Z)$
to $-A(d_1,\ldots,d_k)$ for some $d_1,\ldots,d_k$ with $d_{i}\ge 2$ for all $i$ and $d_{i}\ge 3$ for some $i$.

Let $$d=(n_1+3,\underbrace{2,\ldots,2}_{m_1},n_2+3,\underbrace{2,\ldots,2}_{m_2},\ldots,n_s+3,\underbrace{2,\ldots,2}_{m_s}),\ \ m_i,n_i\ge 0,s\ge 1,$$ and  $$\rho(d)=(m_{s}+3,\underbrace{2,\ldots,2}_{n_s},m_{s-1}+3,\underbrace{2,\ldots,2}_{n_{s-1}},\ldots,m_{1}+3,\underbrace{2,\ldots,2}_{n_{1}}),$$ then by \cite[Theorem 7.3]{N}  we have $-M_{-A(d)}=M_{-A(\rho(d))}$. If $d$ is embeddable (see
\cite{GoLi1} for the definition), then by \cite[Theorems 1.2 and 2.5]{GoLi1} and \cite{GoLi2}, $\xi_{-A(d)}$ is strongly symplectically fillable. For general $d$, we give a necessary condition for $\xi_{-A(d)}$ to be strongly symplectically fillable.

\begin{theorem}\label{hyperbolic} If $\xi_{-A(d)}$ is strongly symplectically fillable, then $$n_1+n_2+\cdots +n_s\le m_1+m_2+\cdots +m_s+4.$$
\end{theorem}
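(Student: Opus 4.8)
The plan is to assume that $\xi_{-A(d)}$ admits a strong symplectic filling $(W,\omega)$ and to cap $W$ off into a closed symplectic $4$-manifold whose topology is pinned down by the combinatorics of $d$, so that an invariant of closed symplectic $4$-manifolds forces the numerical bound. I would begin with a reformulation that makes the role of the duality $-M_{-A(d)}=M_{-A(\rho(d))}$ transparent. Reading off the entries of $d$, the $s$ entries equal to $n_i+3$ and the $\sum_i m_i$ entries equal to $2$ give $\sum_i(d_i-3)=\sum_i n_i-\sum_i m_i$, so the asserted inequality is equivalent to $\sum_i(d_i-3)\le 4$. Writing $k=s+\sum_i m_i$ and $k'=s+\sum_i n_i$ for the lengths of $d$ and $\rho(d)$, the statement is precisely the length estimate $k'\le k+4$ for the Riemenschneider-dual continued fractions underlying $M_{-A(d)}$ and $-M_{-A(d)}$. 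Thus the whole problem is to convert strong fillability into this rank estimate, and the antisymmetry $\sum_i(\rho(d)_i-3)=-\sum_i(d_i-3)$ is the feature I would exploit.

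The second step is to produce a symplectic cap $C$ with concave boundary $(M_{-A(d)},\xi_{-A(d)})$, using the dual data $\rho(d)$: the cyclic configuration of rational curves prescribed by $\rho(d)$ is, in the relevant situation, realized as an anticanonical cycle inside a rational surface, and an appropriate piece of that surface furnishes $C$. Gluing yields a closed symplectic $4$-manifold $X=W\cup_{M_{-A(d)}}C$, and I would record its invariants via Novikov additivity, $\sigma(X)=\sigma(W)+\sigma(C)$ and $\chi(X)=\chi(W)+\chi(C)$ (using $\chi$ of the $3$-dimensional boundary is $0$), where $\chi(C)$ and $\sigma(C)$ are explicit in terms of the $k'$ curves of the dual cycle, hence in terms of $\sum_i(\rho(d)_i-3)$.

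The third step is the obstruction. Since $C$ contains a symplectic sphere of non-negative self-intersection arising from the configuration, $X$ is rational or ruled, so $b_2^+(X)=1$; this forces $b_2^+(W)=0$, i.e. the intersection form of $W$ is negative semi-definite, whence $2\chi(W)+3\sigma(W)=2-2b_1(W)-b_2^-(W)\le 2$. Combining $c_1^2(X)=2\chi(X)+3\sigma(X)$ with the bound $c_1^2(X)\le 9$ valid for any rational or ruled surface, and substituting the explicit $C$-contribution, isolates $\sum_i(d_i-3)$ and yields $\sum_i(d_i-3)\le 4$. Equivalently, one can feed the negative-definite part of $X$ directly into Donaldson's diagonalization theorem: the $k'$ classes of the dual cycle must embed into a standard diagonal lattice alongside the $b_2^-(W)$ classes coming from $W$, and counting ranks against $b_2^+(X)=1$ produces the same $k'\le k+4$.

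The main obstacle I anticipate is the construction of the cap with the correct \emph{concave contact boundary}: one must match $(M_{-A(d)},\xi_{-A(d)})$ exactly, rather than the cusp link $M_{A(\rho(d))}$, and the sign discrepancy between $-A(\rho(d))$ and $A(\rho(d))$ is precisely what makes these negative hyperbolic bundles behave differently from cusp singularity links, so the realization of $\rho(d)$ as an anticanonical cycle (and the verification that the induced boundary contact structure is the twisting-$\pi$ universally tight one) is the delicate point. A secondary difficulty is bookkeeping: controlling $b_1(W)$ and the homology contributed by $W$ so that the rank count is sharp, and pinning the constant $4$ as the exact arithmetic residue of $b_2^+(X)=1$ together with the Euler characteristic of the cap — this is where the precise geometry of the realization, not just its existence, must be used.
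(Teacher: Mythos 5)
Your arithmetic reformulation (the inequality is equivalent to $\sum_i(d_i-3)\le 4$, i.e.\ the length estimate $k'\le k+4$ between $d$ and $\rho(d)$) is correct, but the argument then rests entirely on a cap that you cannot produce, and the ``main obstacle'' you flag at the end is in fact the whole content of the theorem. Concretely: a circular configuration of spheres with self-intersections $e_j=-\rho(d)_j$ has neighborhood boundary $M_{A(\rho(d))}$ (by the formula recalled in Section 5 of the paper, the boundary is $M_{A(-e_1,\ldots,-e_l)}$), which is the \emph{positive} hyperbolic cusp-link bundle, not $-M_{-A(d)}=M_{-A(\rho(d))}$. To obtain a negative hyperbolic bundle as the boundary of a divisor neighborhood one must instead use configurations obtained by blowing up the $(0,0)$-cycle (line-plus-conic type divisors), and the existence of such a realization dominating the required string is precisely Golla and Lisca's embeddability condition on $d$ \cite{GoLi1} --- a hypothesis you do not have: the theorem must hold for every fillable $\xi_{-A(d)}$, including $d$ not known (or not true) to be embeddable, so positing the cap begs the question. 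The Looijenga-type anticanonical realization results you gesture at live on the cusp-link side of exactly the sign discrepancy you yourself identify. Two secondary gaps: even granting a cap, you never verify that its concave boundary induces the specific twisting-$\pi$ universally tight structure $\xi_{-A(d)}$ (an identification of this kind costs the paper a separate open-book argument even in the parabolic case, Lemma~\ref{utight}); and the final numerology --- that $c_1^2\le 9$ for rational or ruled $X$ isolates precisely the constant $4$ --- is asserted rather than computed.

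The paper's actual proof runs in the opposite direction and needs no cap for general $d$: from a hypothetical minimal strong filling $W_{-A(d)}$ it attaches $n_1+\cdots+n_s+s-1$ Weinstein $2$-handles along Legendrian curves in pre-Lagrangian fiber tori (Proposition~\ref{Lsurgery}), producing a Stein cobordism $W$ to the single model $(M_{-A(d_0)},\xi_{-A(d_0)})$ with $d_0=(3,\underbrace{2,\ldots,2}_{m_1+\cdots+m_s+s-1})$; each elementary cobordism has $b_2^+=0$ and $b_2^-=1$ (Lemmas~\ref{b2+-} and \ref{selfinter}), so $b_2^-(W)\ge n_1+\cdots+n_s+s-1$ by Lemma~\ref{b2}. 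Etnyre's Proposition~\ref{minimal} keeps $W_{-A(d)}\cup W$ minimal, so Golla--Lisca's classification of minimal fillings of the model \cite[Theorem 3.1]{GoLi1} applies and bounds $b_2^-(W_{-A(d)}\cup W)\le c+1$ with $c=m_1+\cdots+m_s+s+2$, which yields the inequality. In other words, the paper uses a classification result only at one fixed model manifold and transports the general case there by Stein cobordisms; your scheme would require concave caps for \emph{all} $d$, which is exactly what is missing.
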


If $s=1$, then this necessary condition is also sufficient.

\begin{proposition}\label{hyperbolic1} If $d=(n_1+3,\underbrace{2,\ldots,2}_{m_1})$ with $m_1,n_1\ge 0$, then
$\xi_{-A(d)}$ is strongly symplectically fillable if and only if $n_1\le m_1+4$.
\end{proposition}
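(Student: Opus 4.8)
The necessity of the condition is immediate: for $s=1$ the inequality in Theorem~\ref{hyperbolic} reads $n_1\le m_1+4$, so if $\xi_{-A(d)}$ is strongly symplectically fillable then $n_1\le m_1+4$. Thus the entire content of the proposition lies in the converse, and the plan is to produce a strong symplectic (indeed Stein) filling under the hypothesis $n_1\le m_1+4$.

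The strategy I would follow is to reduce sufficiency to the embeddability criterion of Golla--Lisca quoted above: by \cite[Theorems 1.2 and 2.5]{GoLi1} together with \cite{GoLi2}, it suffices to show that the single-block vector $d=(n_1+3,\underbrace{2,\ldots,2}_{m_1})$ is embeddable whenever $n_1\le m_1+4$. First I would unwind the definition of embeddability from \cite{GoLi1} for this one-parameter family, rewriting it as a combinatorial property of the linear chain with weights $(n_1+3,2,\ldots,2)$, equivalently of its continued fraction expansion and of the Riemenschneider-dual chain $\rho(d)=(m_1+3,\underbrace{2,\ldots,2}_{n_1})$ already recorded in the excerpt.

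Second, I would carry out the combinatorial verification. Because all but one entry of $d$ equals $2$, the continued fraction $[n_1+3,2,\ldots,2]^{-}$ collapses to an explicit fraction $p/q$, and the embeddability test should reduce to a single arithmetic inequality in $n_1$ and $m_1$. I expect this inequality to be exactly $n_1\le m_1+4$, matching the bound coming from Theorem~\ref{hyperbolic}; confirming that the combinatorial condition is equivalent to this inequality, rather than to something strictly weaker or stronger, is the technical heart of the argument. The main obstacle is precisely this identification: proving that embeddability of $(n_1+3,\underbrace{2,\ldots,2}_{m_1})$ holds for every $n_1\le m_1+4$ while it fails at $n_1=m_1+5$ (the failure being already forced by Theorem~\ref{hyperbolic}).

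Should a direct check of the Golla--Lisca criterion prove awkward, an alternative is to construct the filling by hand. One would realize $\xi_{-A(d)}$ as the result of Legendrian surgery on a contact manifold that is known to be Stein fillable, starting from the embeddable base cases with small $n_1$ and adjoining the handles corresponding to the string of $2$'s in $d$, verifying at each stage that the Stein condition is preserved and that the resulting contact structure is the universally tight $\xi_{-A(d)}$. I would keep this surgery-theoretic construction in reserve in case the embeddability bookkeeping is the less transparent of the two routes.
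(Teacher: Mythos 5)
Your proposal is correct and follows essentially the same route as the paper: necessity is the $s=1$ case of Theorem~\ref{hyperbolic}, and sufficiency is reduced to the Golla--Lisca embeddability criterion of \cite[Theorems 1.2 and 2.5]{GoLi1}. The combinatorial verification you deferred is exactly what the paper supplies: for $2\le n_1\le m_1+4$ the string $(n_{1}-1,1,\underbrace{2,\ldots,2}_{n_{1}-2},1)$, which is a blowup of $(0,0)$, satisfies $(n_{1}-1,1,\underbrace{2,\ldots,2}_{n_{1}-2},1)\prec\rho(d)=(m_{1}+3,\underbrace{2,\ldots,2}_{n_{1}})$ precisely because $n_1-1\le m_1+3$, with the edge cases $n_1=1$ and $n_1=0$ handled separately (the latter via \cite[Theorem 2.5(iv)]{GoLi1}), confirming your expectation that the embeddability test reduces to $n_1\le m_1+4$.
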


We also give a partial answer to \cite[Conjecture 1]{GoLi1}.

\begin{proposition}\label{universally tight} Let $(X,\omega)$ be a closed symplectic $4$-manifold obtained as a symplectic blowup of $\mathbb{CP}^2$ with the standard K\"ahler form. Suppose that $$D=C_1\cup\cdots\cup C_l\subset X$$ is a circular, spherical symplectic divisor such that $C_i\cdot C_i\in\{ 0,1\}$ for some $i\in\{ 1,\ldots,l\}$ and the intersection matrix of $D$ is nonsingular. Then, any contact structure induced on the boundary of a concave neighbourhood of $D$ is universally tight.
\end{proposition}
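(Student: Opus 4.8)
The plan is to combine the classification of tight contact structures on torus bundles with the rigidity forced by the nonnegative sphere $C_i$. Since $D$ is a circular, spherical configuration, a regular neighbourhood $N$ is a plumbing along a cycle graph, so its boundary $Y=\partial N$ is a torus bundle over $S^1$, with monodromy read off from the self-intersections $C_j\cdot C_j$ by the usual plumbing calculus. Nonsingularity of the intersection matrix rules out the degenerate case of trivial monodromy (the $3$-torus), placing $(Y,\xi)$ squarely within the torus-bundle classification of \cite{H}. Writing $W=X\setminus\Int(N)$, the decomposition $X=W\cup_Y N$ exhibits $W$ as a strong symplectic filling of $(Y,\xi)$, so by Gromov--Eliashberg $(Y,\xi)$ is tight. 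Hence, by \cite{H}, $\xi$ is either universally tight or virtually overtwisted, and the whole content of the proposition is to exclude the latter. I stress that fillability alone cannot do this: Proposition~\ref{virt.over.} produces Stein fillable virtually overtwisted structures, so the exclusion must use the concave neighbourhood and the sphere $C_i$ in an essential way.

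Next I would use the hypothesis $C_i\cdot C_i\in\{0,1\}$ to put $(X,\omega)$ into standard form. By the adjunction identity $\langle c_1(X),[C_i]\rangle=C_i\cdot C_i+2>0$, the nonnegative-square symplectic sphere $C_i$ is, in McDuff's theory, an embedded $J$-holomorphic sphere moving in a positive-dimensional family. For $C_i\cdot C_i=0$ this family rules $X$ by a fibration $\pi\colon X\to S^2$ with $C_i$ a fiber; for $C_i\cdot C_i=1$ the members form a pencil based at an interior point of $C_i$, and a symplectic blow-up at that base point (supported in the interior, hence leaving $(Y,\xi)$ unchanged) realizes $C_i$ as a square-$0$ fiber of a ruling of a blow-up of $X$, which is again a blow-up of $\mathbb{CP}^2$. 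In either case I obtain a sphere fibration of (a blow-up of) $X$ in which $C_i$ is a fiber, the neighbours $C_{i\pm 1}$ are transverse to the nearby fibers, and the fibration restricts on $Y$ to a circle fibration.

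With this model in hand I would identify $(Y,\xi)$ with an explicit universally tight model. The symplectic form is compatible with the ruling, and the fibers of $\pi$ are $\xi$-transverse near the boundary, so $\xi$ is isotopic to an $S^1$-invariant contact structure that is positively transverse to the circle fibers of $Y$; such fiberwise-transverse tight contact structures are universally tight, since transversality to the fibers is preserved under passage to any finite cover and forces tightness there. Equivalently, in the language of \cite{H}, the normalization shows that the twisting of $\xi$ in the $S^1$-direction is exactly $\pi$ and that the signs in the decomposition into basic slices are uniform, which is precisely the signature of the universally tight class rather than a virtually overtwisted one.

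The main obstacle is exactly this last identification. The difficulty is twofold: one must carry McDuff's $J$-holomorphic normalization of the pair $(X,D)$ through the interior blow-up needed in the square-$1$ case while verifying that $(Y,\xi)$ is genuinely unchanged, and one must then prove that the symplectically induced contact structure is truly \emph{transverse} to the fibers (not merely tight), so that universal tightness follows. In the hyperbolic (Sol) torus-bundle cases there is no global Seifert fibration, so this transversality argument must be localized to the neighbourhood of $D$ and matched directly against Honda's universally tight normal form; showing that this matching always lands on the universally tight structure, uniformly in the combinatorics of the divisor, is the crux of the argument.
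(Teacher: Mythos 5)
Your setup is fine as far as it goes: the boundary is a torus bundle, the complement is a strong filling so $\xi$ is tight, and you are right that fillability alone cannot exclude the virtually overtwisted structures (Proposition~\ref{virt.over.} indeed shows some are Stein fillable), so the sphere $C_i$ must enter essentially. But your way of using $C_i$ has a genuine gap, which you yourself flag without resolving. Your plan is to normalize the pair $(X,D)$ via McDuff's theory so that $C_i$ becomes a fiber of a sphere fibration, and then to conclude that the induced fibration of $Y=\partial V$ by circles is positively transverse to $\xi$, whence universal tightness. This breaks down precisely in the main case: when the monodromy is hyperbolic, $Y$ is a Sol manifold and admits \emph{no} Seifert fibration, so there is no circle fibration of $Y$ to be transverse to; the ``localized transversality matched against Honda's normal form'' you invoke is exactly the missing argument, not a routine verification. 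Even in the parabolic and elliptic cases the plan is not established: after the blow-ups producing $X$ the ruling acquires singular fibers (chains of exceptional spheres) which can meet the divisor, so the claim that the ruling restricts to a circle fibration of $Y$ transverse to $\xi$ is unproven, as is the asserted implication ``transverse to the fibers $\Rightarrow$ universally tight,'' and so is the claim that the interior blow-up in the square-one case leaves $(Y,\xi)$ unchanged as a \emph{contact} manifold rather than merely as a smooth one.

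The paper avoids all of this by using $C_i$ homologically rather than geometrically. The key step is Lemma~\ref{no filling}: $(-M_A,\xi)$ admits no strong symplectic filling $P$ with $b_1(P)=1$, because the closed symplectic manifold $V\cup P$ contains the symplectic sphere $C_i$ with $C_i\cdot C_i\in\{0,1\}$, hence is rational or ruled by McDuff \cite{Mc} and has even $b_1$, while a Mayer--Vietoris computation (using that $H_1(M_A;\Z)\to H_1(V;\Z)\cong\Z$ is onto and $b_1(V)=b_1(P)=1$) forces $b_1(V\cup P)=1$. Universal tightness then follows by running through Honda's classification \cite{H} and checking that every non-universally-tight candidate \emph{does} admit such a filling or is outright non-fillable: the elliptic virtually overtwisted structures are not weakly semi-fillable \cite{EtH1} (contradicting the existence of $P_0=X\setminus\Int(V)$), the positive and negative hyperbolic minimally twisting cases have Stein fillings with $b_1=1$ by \cite{BO} and \cite[Lemma 4.3]{GoLi1}, and the negative parabolic virtually overtwisted structure has the $b_1=1$ Stein filling constructed in Section 4. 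So the exclusion is a parity-of-$b_1$ obstruction against hypothetical fillings, with no need to put $(X,D)$ or $\xi$ into any normal form; to complete your approach you would instead have to supply the fiber-transversality identification in all of Honda's cases, including the Sol case where it cannot even be formulated as stated.
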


In Section 2, we give some preliminaries. In Section 3, we identify the strongly symplectically fillable contact structure on $M_n$ constructed in \cite{GoLi1} (Lemma~\ref{utight})
and prove Theorems~\ref{mainthm}, \ref{hyperbolic} and Proposition~\ref{hyperbolic1}. In Section 4, we prove Proposition~\ref{virt.over.}. In Section 5, we prove
Proposition~\ref{universally tight}.

\section*{Acknowledgement}
Authors would like to thank John Etnyre and Paolo Lisca for useful email correspondence. The first author is partially supported by
grant no. 11371033 of the National Natural Science Foundation of China. The second author is partially supported by grant no. 11471212 of the
National Natural Science Foundation of China.

\section{Preliminaries}

\subsection{Legendrian surgery on $(M_A,\xi_A)$}
A fiber torus of $M_A$ is a torus $T^2\times\{ p\}\subset M_A$ where $p\in [0,1]$. If $A\in SL(2,\Z)$ is negative
parabolic or negative hyperbolic, then in $(M_A,\xi_A)$, each fiber torus $T^2\times\{ p\}$ is pre-Lagrangian (i.e., linearly foliated)
by the construction of $\xi_A$ (see Section 1).
Using the same method as in the proof of \cite[Proposition 11]{DG}, we can deduce the following proposition. Note that $M_A$ corresponds to $T_{A^{-1}}$ in \cite{DG}.

\begin{proposition}\label{Lsurgery}
 Assume that $A\in SL(2,\Z)$ is negative parabolic or negative hyperbolic. Let $L$ be a simple closed curve on $T^2$ such that $L\times \{ p\}\subset
 T^2\times\{ p\}$ ($p\in [0,1]$) is Legendrian in
 $(M_A,\xi_A)$. If $AT_L$ is negative parabolic or negative hyperbolic, where $T_L\in SL(2,\Z)$ corresponds to a right-handed Dehn twist along $L$ in $T^2$, then the Legendrian surgery along $L\times \{ p\}$
 in $(M_A,\xi_A)$ yields the contact manifold $(M_{AT_L},\xi_{AT_L})$.
\end{proposition}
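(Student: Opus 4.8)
The plan is to follow the strategy of \cite[Proposition 11]{DG} and reduce the statement to an explicit local model for Legendrian surgery on a pre-Lagrangian torus, taking care to track the convention $M_A\leftrightarrow T_{A^{-1}}$ and the handedness of the Dehn twist. First I would fix the defining data of $(M_A,\xi_A)$: writing $\xi_A=\zeta(\phi)$ as the quotient of $\tilde\zeta(\phi)=\ker(\sin\phi(t)\,\mathrm{d}x+\cos\phi(t)\,\mathrm{d}y)$ on $T^2\times\R$, the fiber torus $T^2\times\{p\}$ is linearly foliated with leaves parallel to $\Delta_{\phi(p)}$. Since $L\times\{p\}$ is Legendrian, the slope of $L$ equals that of the foliation, and a collar $T^2\times(p-\varepsilon,p+\varepsilon)$ provides a standard pre-Lagrangian model in which $L\times\{p\}$ has a convex standard neighbourhood whose contact framing coincides with the surface (fiber) framing of $L$.

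Next I would carry out the smooth surgery. Legendrian surgery is contact $(-1)$-surgery, i.e.\ surgery with framing $\tb-1$; because the contact framing of $L\times\{p\}$ agrees with its surface framing in the fiber, the surgery is a $(-1)$-surgery relative to the fiber framing. By the standard identification of a $(-1)$-surgery on a fiber curve with a change of monodromy, this cuts $M_A$ along $T^2\times\{p\}$ and reglues with a right-handed Dehn twist along $L$, producing a smooth torus bundle whose monodromy differs from $A$ by $T_L$. Matching the DG convention $M_A\leftrightarrow T_{A^{-1}}$ and the right-handedness of the twist shows the new monodromy is $AT_L$, so the underlying manifold is $M_{AT_L}$. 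The hypothesis that $AT_L$ is again negative parabolic or negative hyperbolic is exactly what is needed for the target contact structure $\xi_{AT_L}$ to be defined.

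It then remains to identify the resulting contact structure with $\xi_{AT_L}$. By Honda's classification \cite[Theorem 0.1]{H} the universally tight contact structure with twisting $\pi$ on $M_{AT_L}$ is unique up to isotopy, so it suffices to check that the post-surgery structure is universally tight with twisting exactly $\pi$ in the $S^1$-direction. I would do this by reconstructing the surgered contact structure as $\zeta(\phi')$ for a function $\phi'$ with strictly positive derivative satisfying $AT_L(\Delta_{\phi'(t)})=\Delta_{\phi'(t-1)}$: the surgery alters $\phi$ only inside the collar, so $\phi'=\phi$ outside it, and one checks that $\phi'$ can be chosen with $\pi\le\sup_{t}(\phi'(t+1)-\phi'(t))<2\pi$, i.e.\ $m=1$, which gives $\xi_{AT_L}$.

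The main obstacle I expect is this last identification, namely verifying that the surgery produces precisely the model structure with twisting $\pi$ rather than some other tight contact structure or one with extra Giroux torsion. Concretely, the contact planes must be tracked across the regluing region to confirm that the new characteristic foliation and dividing data on the fiber tori are compatible with $\tilde\zeta(\phi')$ and that no additional $\pi$ of twisting is introduced by the Dehn twist. Here the careful bookkeeping of the convention $M_A\leftrightarrow T_{A^{-1}}$ and of the sign of the twist is essential to land on $AT_L$ (and on $m=1$) rather than on $AT_L^{-1}$, $T_LA$, or a structure with $m\ge 3$; this is exactly the point at which the argument of \cite[Proposition 11]{DG} is transcribed to the present negative parabolic and negative hyperbolic setting.
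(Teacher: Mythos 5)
Your proposal is correct and takes essentially the same route as the paper, whose entire proof of this proposition is the remark that one argues ``using the same method as in the proof of \cite[Proposition 11]{DG}'' (with the convention that $M_A$ corresponds to $T_{A^{-1}}$ there): namely the local pre-Lagrangian model in which the contact framing of $L\times\{p\}$ agrees with the fiber framing, so Legendrian surgery is a cut-and-reglue by $T_L$ producing again a model structure $\zeta(\phi')$ with $m=1$. Your extra appeal to Honda's classification is harmless but redundant, since exhibiting the surgered structure as $\zeta(\phi')$ with twisting in $[\pi,2\pi)$ already identifies it as $\xi_{AT_L}$ by the paper's definition of that contact structure.
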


In $T^2=\R^2/\Z^2$, let $\mu =\{\left( \begin{array}{c} t \\ 0 \end{array}\right) :0\le t\le 1\}$
and $\lambda =\{\left( \begin{array}{c} 0 \\ t \end{array}\right) :0\le t\le 1\}$. Then
$\mu$ (resp. $\lambda$) is a linear simple closed curve in $T^2=\R^2/\Z^2$ of slope $0$ (resp. $\infty$).  Here we use the parameter $t$ to orient $\mu$ and $\lambda$.  The right-handed Dehn twists
$T_{\mu}=\left(
  \begin{array}{cc}   1 & 1 \\  0 & 1
   \end{array}
\right) =T$ and $T_{\lambda}=\left(
  \begin{array}{cc}   1 & 0 \\  -1 & 1
   \end{array}
\right)$.

\subsection{$b_2^+$ and $b_2^-$}

The following lemma is obvious.

\begin{lemma}\label{b2}
Let $X_1,X_2$ be two compact oriented $4$-manifolds. Let $N_i$ be
a component of $\partial X_i$, $i=1,2$. Suppose $f:N_1\to N_2$ is
an orientation-reversing diffeomorphism. The manifold obtained by
gluing $X_1$ and $X_2$ via $f$ is denoted by $X$. Then we have
$b_2^+(X)\ge b_2^+(X_1)+b_2^+(X_2)$ and $b_2^-(X)\ge
b_2^-(X_1)+b_2^-(X_2).$
\end{lemma}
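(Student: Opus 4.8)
The plan is to realize $b_2^{+}$ and $b_2^{-}$ as the maximal dimensions of positive- and negative-definite subspaces for the intersection form, and to transport such subspaces from $X_1$ and $X_2$ into $X$ along the inclusions $\iota_i\colon X_i\hookrightarrow X$, $i=1,2$. The one geometric input I need is that the intersection pairing is computed by representing homology classes by closed oriented surfaces pushed off the boundary into the interior and then counting transverse intersection points with sign. Writing $Q_{X_i}$ and $Q_X$ for the intersection forms on $H_2(X_i;\R)$ and $H_2(X;\R)$, this yields two facts: (i) $Q_X(\iota_{i*}\alpha,\iota_{i*}\beta)=Q_{X_i}(\alpha,\beta)$ for all $\alpha,\beta\in H_2(X_i;\R)$, since a surface representing $\alpha$ in $\Int X_i$ represents $\iota_{i*}\alpha$ in $X$ and its self/mutual intersections are unaffected by the larger ambient manifold; and (ii) $Q_X(\iota_{1*}\alpha_1,\iota_{2*}\alpha_2)=0$ for $\alpha_i\in H_2(X_i;\R)$, because the two classes have representatives lying in the disjoint open sets $\Int X_1$ and $\Int X_2$ of $X$.

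Granting (i) and (ii), the rest is linear algebra. First I would choose a positive-definite subspace $V_i\subset H_2(X_i;\R)$ with $\dim V_i=b_2^{+}(X_i)$, and consider the linear map
$$\Phi\colon V_1\oplus V_2\longrightarrow H_2(X;\R),\qquad (\alpha_1,\alpha_2)\longmapsto \iota_{1*}\alpha_1+\iota_{2*}\alpha_2,$$
with image $W=\Phi(V_1\oplus V_2)$. For $w=\iota_{1*}\alpha_1+\iota_{2*}\alpha_2$ with $\alpha_i\in V_i$, facts (i) and (ii) give
$$Q_X(w,w)=Q_{X_1}(\alpha_1,\alpha_1)+Q_{X_2}(\alpha_2,\alpha_2)\ge 0,$$
with equality if and only if $\alpha_1=\alpha_2=0$, since each $Q_{X_i}$ is positive definite on $V_i$. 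In particular $\Phi$ is injective, so $\dim W=b_2^{+}(X_1)+b_2^{+}(X_2)$, and the same computation shows $Q_X$ is positive definite on $W$. Hence $b_2^{+}(X)\ge \dim W=b_2^{+}(X_1)+b_2^{+}(X_2)$. Replacing \emph{positive} by \emph{negative} throughout gives the inequality for $b_2^{-}$.

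The only genuine obstacle is justifying (i) and (ii), i.e.\ that intersection numbers are insensitive to the ambient manifold once representatives are pushed into the interior; this is the standard transversality description of the intersection form on a manifold with boundary, and it is precisely the content that makes the lemma ``obvious.'' I would emphasize that no hypothesis on $N$ or on the gluing diffeomorphism $f$ beyond its being orientation-reversing is used, and that $X$ is allowed to have remaining boundary $(\partial X_1\setminus N_1)\cup(\partial X_2\setminus N_2)$, since the argument works directly with $H_2(\,\cdot\,;\R)$ and never invokes a Mayer--Vietoris boundary map.
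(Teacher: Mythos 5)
Your argument is correct. The paper in fact offers no proof at all --- it simply declares the lemma ``obvious'' --- and what you have written is precisely the standard justification the authors have in mind: transport maximal definite subspaces along $\iota_{1*}$ and $\iota_{2*}$, using naturality of the intersection pairing under inclusion together with $Q_X$-orthogonality of classes represented in the disjoint interiors, and then read off injectivity and definiteness of the image from $Q_X(w,w)=Q_{X_1}(\alpha_1,\alpha_1)+Q_{X_2}(\alpha_2,\alpha_2)$. You also handle correctly the two points where the statement could silently go wrong: $b_2^{\pm}$ of a compact $4$-manifold with boundary must be taken as the maximal dimension of a definite subspace of the possibly degenerate form on $H_2(\,\cdot\,;\R)$, and the orientation-reversing hypothesis on $f$ is exactly what makes $X$ coherently oriented so that the pairings restrict with the right signs; note too, as you do, that $X$ may retain boundary, which the subspace argument never needs to see.
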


\subsection{Minimal strong symplectic fillings and Stein cobordism}

The following proposition is due to John Etnyre.
\begin{proposition}\cite{Et}\label{minimal}
Let $N'$ be a minimal strong symplectic filling of a contact 3-manifold $(Y_{1}, \xi_{1})$, $W'$ be a Stein cobordism from
$(Y_{1}, \xi_{1})$ to a contact 3-manifold $(Y_{2}, \xi_{2})$. Then $N'\cup W'$ is a minimal strong symplectic filling of $(Y_{2}, \xi_{2})$.
\end{proposition}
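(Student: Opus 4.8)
The plan is to separate the statement into two parts: that $X:=N'\cup_{Y_1}W'$ is a strong symplectic filling of $(Y_2,\xi_2)$, and that it is minimal. The first part is the standard symplectic gluing. Since $N'$ is a strong filling, a Liouville vector field defined near $Y_1$ points outward along $Y_1$ and induces a contact form for $\xi_1$; the Stein cobordism $W'$ carries a global Weinstein structure whose concave end is $(Y_1,\xi_1)$, inducing a contact form for the same $\xi_1$. After rescaling the two induced contact forms to agree, the Liouville collars match and we may glue the symplectic forms of $N'$ and $W'$ across $Y_1$; the resulting form $\omega$ on $X$ has convex boundary $Y_2$ with induced contact structure $\xi_2$, so $(X,\omega)$ is a strong filling of $(Y_2,\xi_2)$.

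For minimality, suppose to the contrary that $(X,\omega)$ contains a symplectically embedded sphere of self-intersection $-1$, so there is an exceptional class $e\in H_2(X)$ with $e\cdot e=-1$ and $\omega(e)>0$. I would choose an $\omega$-compatible almost complex structure $J$ on $X$ that (i) coincides with the integrable complex structure of the Stein cobordism on $W'$ away from a thin collar of $Y_1$, (ii) is pseudoconvex near the boundary $Y_2$, and (iii) is generic on $\Int(N')$. The Stein structure provides a strictly $J$-plurisubharmonic function $\psi$ on $W'$ with $Y_1=\psi^{-1}(\min\psi)$ and $Y_2=\psi^{-1}(\max\psi)$.

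By McDuff's theorem on the persistence of exceptional classes (applied after capping the convex end $(Y_2,\xi_2)$ and, if necessary, stretching the neck along $Y_2$ to keep curves away from the cap), the class $e$ is represented by a $J$-holomorphic curve $C$, each of whose components is $J$-holomorphic. On any component of $C$ meeting $\Int(W')$, the function $\psi$ pulls back to a subharmonic function whose only boundary lies on $Y_1$, because $C$ is closed and disjoint from $\partial X=Y_2$. By the maximum principle such a component attains the maximum of $\psi$ on $Y_1=\{\psi=\min\psi\}$, which is impossible unless it lies entirely in $Y_1$; the strict pseudoconvexity of $Y_1$ then rules this out, since a nonconstant holomorphic curve cannot be contained in a strictly pseudoconvex hypersurface. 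Hence $C\subset\Int(N')$, so $e$ lies in the image of $H_2(N')\to H_2(X)$ and is represented by an embedded $J$-holomorphic sphere of self-intersection $-1$ inside $N'$, the embeddedness coming from the genericity of $J$ on $\Int(N')$. This is a symplectic $(-1)$-sphere in $N'$, contradicting its minimality; therefore $X$ is minimal.

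The main obstacle is precisely this existence-and-confinement step: producing, for the specific $J$ above, a $J$-holomorphic representative of $e$ all of whose components lie in $\Int(N')$. The plurisubharmonicity on $W'$ confines curves away from the Stein part cleanly, but combining McDuff's representability (most naturally stated for closed manifolds) with control over the behavior near the capped convex end $Y_2$ requires care, and is where the neck-stretching and compactness input enters.
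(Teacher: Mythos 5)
Your proposal is correct and follows essentially the same route as the paper's proof: replace the symplectic $(-1)$-sphere by a pseudoholomorphic representative of its class (the paper cites Taubes's Proposition 7.1 where you invoke McDuff's persistence of exceptional classes), use the strictly plurisubharmonic function on the Stein cobordism $W'$ and the maximum principle to confine the curve to $N'$, and derive a contradiction with the minimality of $N'$. Your extra care about capping the convex end and neck-stretching merely makes explicit a compactness point that the paper leaves implicit in its one-line citation.
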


\begin{proof}
If there was a symplectic sphere $\Sigma$ of self-intersection $-1$ in $N'\cup W'$,  then according to \cite[Proposition 7.1]{T}, $\Sigma$ is an (almost) complex sphere. It cannot intersect $W'$, otherwise the strictly pluri-subharmonic function would then have a maximum when restricted to the sphere and that can't happen. Thus the sphere $\Sigma$ would have to be entirely contained in $N'$, but that's not possible either since $N'$ was minimal.
\end{proof}

\section{Universally tight  contact torus bundles}

\subsection{Identification of the contact structures constructed on $M_n$ ($-4\le n\le -1$) in \cite{GoLi1}.}

\begin{lemma} \label{utight}
For every integer $-4\le n\le -1$, the strongly symplectically fillable contact structure on $M_n$ constructed in \cite{GoLi1} is (contactomorphic to) $\xi_n$.
\end{lemma}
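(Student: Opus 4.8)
The plan is to reduce the statement to Honda's classification and then propagate an identification along $n$ using Proposition~\ref{Lsurgery}. Write $\zeta_n$ for the strongly symplectically fillable contact structure on $M_n$ constructed in \cite{GoLi1}. A strong symplectic filling forces tightness, so $\zeta_n$ is tight, and by \cite[Theorem 0.1]{H} the only tight contact structures on $M_n$ ($n<0$) up to isotopy are the universally tight $\xi_n$ and the virtually overtwisted $\xi_n'$. Hence $\zeta_n$ is isotopic to one of these two, and the whole problem collapses to showing that $\zeta_n$ is \emph{not} virtually overtwisted. The organizing principle is that the monodromies are linked by a single right-handed Dehn twist: since $T_\mu=T$ and $-T^n\cdot T=-T^{n+1}$, with each $-T^n$ negative parabolic (trace $-2$), Proposition~\ref{Lsurgery} shows that Legendrian surgery along $\mu\times\{p\}$ in $(M_n,\xi_n)$ yields $(M_{n+1},\xi_{n+1})$. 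Thus the universally tight structures $\{\xi_n\}$ form a tower under $\mu$-surgery.

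Next I would anchor the identification in the range $-3\le n\le -1$, where $\xi_n$ is already Stein fillable by \cite{V}, and then reach $n=-4$ by the surgery comparison. Concretely, I would read off from \cite{GoLi1} the Legendrian handle presentation of the fillings of the $\zeta_n$ and recognize that passing from $\zeta_n$ to $\zeta_{n+1}$ is realized by Legendrian surgery along a curve contact-isotopic to $\mu\times\{p\}$ carrying its $\xi_n$-induced ($\tb=0$) framing. Granting this matching, and granting $\zeta_n=\xi_n$ for $-3\le n\le -1$, applying the surgery to the remaining case gives $(M_{-3},\zeta_{-3})=(M_{-3},\xi_{-3})$ as the result of surgery on $(M_{-4},\zeta_{-4})$. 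Since $\zeta_{-4}$ is tight it is either $\xi_{-4}$ or $\xi_{-4}'$; if one checks that $\mu$-surgery on the virtually overtwisted $\xi_{-4}'$ produces the virtually overtwisted $\xi_{-3}'$ rather than $\xi_{-3}$, then the equality $\zeta_{-3}=\xi_{-3}\neq\xi_{-3}'$ forces $\zeta_{-4}=\xi_{-4}$, which in turn yields the strong fillability of $\xi_{-4}$.

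The main obstacle is twofold. First, I must verify that the Legendrian knot appearing in the filling of \cite{GoLi1} is genuinely isotopic, inside the contact torus bundle, to the pre-Lagrangian curve $\mu\times\{p\}$ with exactly the framing required by Proposition~\ref{Lsurgery}; this is a convex-surface and front-projection bookkeeping task on the fiber tori, tracking the dividing slopes so that the surgered manifold is again $(M_{n+1},\xi_{n+1})$ and not merely some filling of $M_{n+1}$. Second, I must control the behaviour of virtual overtwistedness at the bottom of the tower, which is not preserved in general by Legendrian surgery; rather than argue this abstractly, I would prefer to detect universal tightness of $\zeta_n$ directly from the construction — for instance by exhibiting the full $\pi$-twisting in the $S^1$-direction, equivalently that the fiber tori remain linearly foliated with the twisting of $\xi_n$ — so that the classification of \cite[Theorem 0.1]{H} identifies $\zeta_n$ with $\xi_n$ outright. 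Establishing this twisting datum from the handle picture of \cite{GoLi1} is where I expect the real work to lie.
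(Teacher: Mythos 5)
There is a genuine gap, and it sits at the very first step. Honda's classification does not reduce the problem to a two-way choice: on a negative parabolic bundle $M_n$ ($n<0$) the tight contact structures up to isotopy are \emph{not} just $\xi_n$ and $\xi_n'$. By \cite[Theorem 0.1]{H} there are infinitely many universally tight structures — the non-minimally twisting ones $\zeta(\phi)$ with twisting $m\pi$ for every odd $m\ge 1$, and in addition infinitely many minimally twisting universally tight structures (indexed by a positive integer $\mu'$; these are exactly the structures handled case-by-case in the proof of Lemma~\ref{doublecover1} of this paper). Strong fillability of the Golla--Lisca structure $\zeta_n$ rules out the $m\ge 3$ family via positive Giroux torsion and \cite[Corollary 3]{Ga3}, but it gives you no leverage against the minimally twisting universally tight family, so your collapse to ``$\zeta_n\in\{\xi_n,\xi_n'\}$'' is unjustified. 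Worse, your anchor is circular: for $-3\le n\le -1$ the content of Lemma~\ref{utight} is precisely that $\zeta_n$ is $\xi_n$, and citing \cite{V} only yields Stein fillability of $\xi_n$ — fillability cannot single out which tight structure the construction of \cite{GoLi1} produces, since $\xi_n'$ is also Stein fillable for every $n<0$ (Proposition~\ref{virt.over.} of this paper). So the base cases of your tower carry exactly the same identification problem as the case $n=-4$ you hoped to deduce from them.

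The remaining steps are announced rather than proved, and at least one is not even well-posed: Proposition~\ref{Lsurgery} applies to $(M_n,\xi_n)$ because the fiber tori are pre-Lagrangian by construction, but in the virtually overtwisted $\xi_{-4}'$ there is no such structure, so ``$\mu$-surgery on $\xi_{-4}'$ yields $\xi_{-3}'$'' has no canonical meaning and no argument behind it; likewise the matching of the handle attachments in \cite{GoLi1} with Legendrian surgery along $\mu\times\{p\}$, and the extraction of the $\pi$-twisting datum from the handle picture, are exactly the points you defer (``where I expect the real work to lie''). The paper sidesteps classification entirely: by \cite[Theorem 1.1(part B)]{Ga1} the structure of \cite{GoLi1} is supported by an explicit open book read off from the divisor, which after the stabilization moves of \cite[Lemma 4.4.2]{V} becomes the open book with monodromy $\psi_{-4}=t_{\delta_1}t_{\delta_2}t_{\alpha_1}^{-6}t_{\alpha_2}^{-2}$, already known from \cite{V} to support $\xi_{-4}$. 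That direct identification is what your plan is missing, and no fillability dichotomy can substitute for it.
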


\begin{proof}
By \cite[Lemma 2.3]{GoLi1}, for each $n\ge -4$ (here $n$ corresponds to $-n$ in \cite[Lemma 2.3]{GoLi1}), there is a spherical complex divisor $D\subset\mathbb{CP}^2\# (5+n)\overline{\mathbb{CP}}^2$ which is the proper transform of a complex line and a smooth conic in general position in $\mathbb{CP}^2$, obtained by blowing up at $4+n$ generic points of the conic and one generic point of the complex line, such that the boundary of a closed regular neighborhood of $D$ is $-M_n$. Since the intersection matrix of $D$ is nonsingular and not negative definite, by \cite[Theorem 1.3]{LM} (see also \cite[Theorem 2.5]{GoLi1}), there is a closed regular neighborhood of $D$ and a symplectic form $\omega$ on $\mathbb{CP}^2\# (5+n)\overline{\mathbb{CP}}^2$ such that $\partial W=-M_n$ is a concave boundary of $(W,\omega)$. The contact structure on $M_n$ constructed in \cite{GoLi1} is induced by the $\omega$-concave structure on $\partial W=-M_n$. We prove the lemma for the case $n=-4$. The proof for other cases are similar.

According to \cite[Theorem 1.1(part B)]{Ga1} and \cite{Ga2},  the contact structure on $M_{-4}$ is supported by an open book decomposition whose page is shown in Figure \ref{fig:19} and whose monodromy is the composition of Dehn twists along the $\pm$-labelled simple closed curves, where the Dehn twists along the $+$(resp. $-$)-labelled curves are right (resp. left) handed. Repeatedly using  \cite[Lemma 4.4.2]{V}, the above open book decomposition is stably equivalent to the open book decomposition shown in Figure \ref{fig:17} with monodromy $\psi_{-4}=t_{\delta_1}t_{\delta_2}t_{\alpha_1}^{-6}t_{\alpha_2}^{-2}$, where $t_{\gamma}$ denotes a right-handed Dehn twist along the simple closed curve $\gamma$.  By part 3(d) of the proof of \cite[Theorem 4.3.1]{V}, the latter open book decomposition supports the contact structure $\xi_{-4}$ on $M_{-4}$. So the lemma holds.
\end{proof}

\begin{figure}[htb]
\begin{overpic}
{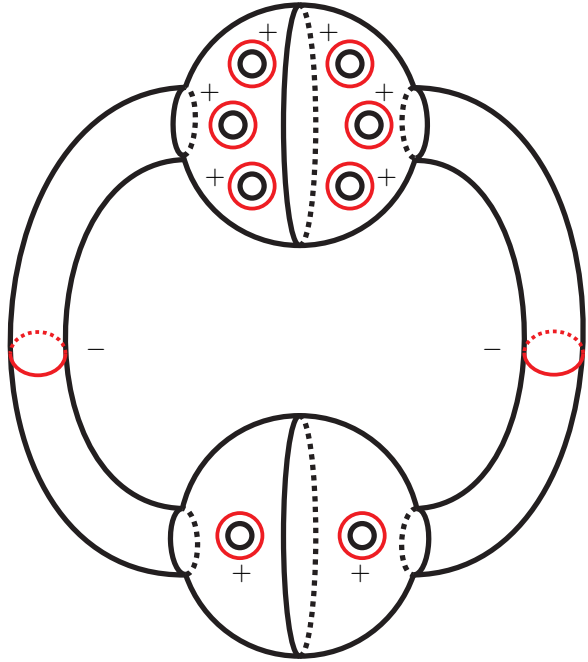}
\put(95, 235){$+$}
\put(73, 212){$+$}
\put(75, 180){$+$}
\put(118, 235){$+$}
\put(139, 212){$+$}
\put(140, 180){$+$}
\put(85, 30){$+$}
\put(130, 30){$+$}
\put(30, 115){$-$}
\put(180, 115){$-$}

\end{overpic}
\caption{A compact genus one surface with eight boundary components.}
\label{fig:19}
\end{figure}

\begin{figure}[htb]
\begin{overpic}
{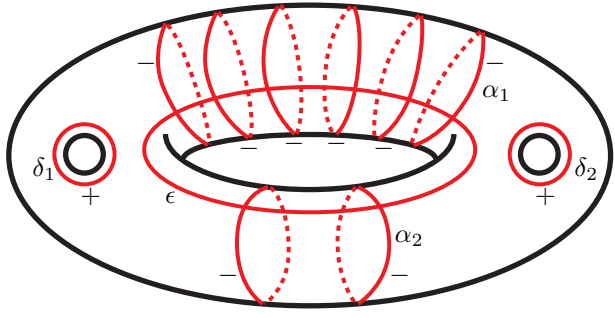}
\put(80, 10){$-$}
\put(145, 10){$-$}
\put(49, 90){$-$}
\put(181, 90){$-$}
\put(88, 58){$-$}
\put(139, 58){$-$}
\put(105, 60){$-$}
\put(121, 60){$-$}
\put(28, 40){$+$}
\put(200, 40){$+$}
\put(180, 80){$\alpha_1$}
\put(147, 25){$\alpha_2$}
\put(10, 50){$\delta_1$}
\put(215, 50){$\delta_2$}
\put(60, 40){$\epsilon$}

\end{overpic}
\caption{A compact genus one surface with two boundary components.}
\label{fig:17}
\end{figure}

\begin{proposition}\label{Stfillable}
For every integer $-4\le n\le -1$, the contact structure $\xi_n$ on $M_n$ is Stein fillable.
\end{proposition}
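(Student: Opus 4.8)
The plan is to upgrade the strong fillability of $\xi_n$ to Stein fillability by exhibiting an open book that supports $\xi_n$ and whose monodromy is a product of right-handed Dehn twists; a contact structure carried by such a positive open book is Stein fillable by the Giroux correspondence together with the Loi--Piergallini theorem. For $-3\le n\le -1$ this is already contained in \cite{V}, so the genuinely new case is $n=-4$, and I will concentrate on it.

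The starting point is Lemma~\ref{utight} and its proof, which show that $\xi_{-4}$ is supported by the open book of Figure~\ref{fig:17}: the page $\Sigma$ is a genus-one surface with two boundary components, and the monodromy is $\psi_{-4}=t_{\delta_1}t_{\delta_2}t_{\alpha_1}^{-6}t_{\alpha_2}^{-2}$. This word is not manifestly positive because of the factors $t_{\alpha_1}^{-6}$ and $t_{\alpha_2}^{-2}$, so the whole problem is to rewrite $\psi_{-4}$, after performing positive stabilizations of the open book, as a product of right-handed Dehn twists. Since positive stabilization does not change the supported contact structure, any such positive factorization will still carry $\xi_{-4}$ and hence certify Stein fillability.

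First I would assemble the relations available in the mapping class group of $\Sigma$: the boundary twists $t_{\delta_1}t_{\delta_2}$ can be expressed as a positive word in the interior curves by a chain relation involving $\alpha_1,\alpha_2$ and $\epsilon$, and the transverse intersections among these curves give braid relations that allow the factors to be reshuffled by Hurwitz moves. Substituting such a chain relation for $t_{\delta_1}t_{\delta_2}$ converts the two boundary twists into right-handed twists along $\alpha_1,\alpha_2,\epsilon$, after which the remaining task is to absorb the negative powers $t_{\alpha_1}^{-6}$ and $t_{\alpha_2}^{-2}$. To do this I would use lantern substitutions: each lantern relation trades a single boundary-parallel right-handed twist for three right-handed twists along interior curves, and by inserting these lanterns along curves chosen to meet $\alpha_1$ and $\alpha_2$ correctly one can cancel the negative twists one at a time while only ever introducing further right-handed twists and new separating curves on a stabilized page. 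The outcome I expect is a positive allowable Lefschetz fibration whose boundary open book supports $\xi_{-4}$.

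The hard part will be precisely this cancellation. A single chain relation supplies only four copies of $t_{\alpha_1}$, whereas six are needed to kill $t_{\alpha_1}^{-6}$, so the bookkeeping is tight and there is almost no slack, consistent with the fact that $n=-4$ is the borderline case, $\xi_{-5}$ not even being strongly fillable by Theorem~\ref{mainthm}. I would therefore have to choose the stabilizing curves and the order of Hurwitz moves with care, and verify at every step that the supported contact structure is unchanged, so that the terminal positive word is guaranteed to carry $\xi_{-4}$ and not some other tight contact structure on $M_{-4}$. Should a direct positive factorization prove too delicate, an alternative would be to read off a Legendrian surgery (Weinstein handle) diagram for $(M_{-4},\xi_{-4})$ from the Golla--Lisca filling of \cite{GoLi1} and check that all surgeries are contact $(-1)$-surgeries, which would again give Stein fillability.
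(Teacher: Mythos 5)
Your framing is correct as far as it goes: reducing to $n=-4$ via \cite{V}, invoking Lemma~\ref{utight} to get the open book of Figure~\ref{fig:17} with monodromy $\psi_{-4}=t_{\delta_1}t_{\delta_2}t_{\alpha_1}^{-6}t_{\alpha_2}^{-2}$, and aiming for a positive factorization is exactly the paper's strategy. But the proposal stops precisely where the proof begins: no positive factorization is actually produced. What you offer instead is a plan (chain relation for $t_{\delta_1}t_{\delta_2}$, then lantern substitutions on a stabilized page) together with an admission that ``the bookkeeping is tight'' and an unexecuted fallback. The fallback is itself problematic: the Golla--Lisca fillings of \cite{GoLi1} arise as complements of neighborhoods of symplectic divisors in closed symplectic manifolds, not as Weinstein handlebodies, so there is no Legendrian surgery diagram to ``read off'' from them. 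As written, the argument establishes nothing beyond the (already known) reduction to a factorization problem.

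Moreover, the obstacle you identify --- ``a single chain relation supplies only four copies of $t_{\alpha_1}$, whereas six are needed to kill $t_{\alpha_1}^{-6}$'' --- rests on a false premise, namely that each negative twist must be literally cancelled against a positive twist along the \emph{same} curve. Since $w\,t_{\epsilon}\,w^{-1}=t_{w(\epsilon)}$ is a right-handed twist for \emph{any} mapping class $w$, negative twists can be absorbed into conjugations; no stabilization and no lantern substitution is needed. This is exactly how the paper proceeds: using the chain relation $t_{\delta_1}t_{\delta_2}=(t_{\alpha_1}t_{\epsilon}t_{\alpha_2})^4$, the centrality of the boundary twists, the commutativity of $t_{\alpha_1}$ and $t_{\alpha_2}$, and braid relations, one arrives at
\[
\psi_{-4}=\bigl(t_{\alpha_1}^{-2}t_{\epsilon}t_{\alpha_1}^{2}\bigr)\,t_{\alpha_2}\,t_{\epsilon}\,\bigl(t_{\alpha_1}t_{\alpha_2}t_{\alpha_1}\,t_{\epsilon}\,t_{\alpha_1}^{-1}t_{\alpha_2}^{-1}t_{\alpha_1}^{-1}\bigr),
\]
a product of four right-handed Dehn twists along $t_{\alpha_1}^{-2}(\epsilon)$, $\alpha_2$, $\epsilon$ and $t_{\alpha_1}t_{\alpha_2}t_{\alpha_1}(\epsilon)$, on the \emph{original} page of Figure~\ref{fig:17}. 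Until you exhibit such an explicit identity (or genuinely carry out one of your alternative routes and verify the supported contact structure along the way), the Stein fillability of $\xi_{-4}$ remains unproved; the gap is the computation itself, not the surrounding framework.
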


\begin{proof}
We only prove the proposition for the case $n=-4$. It suffices to show that the monodromy $\psi_{-4}$ admits a  factorization into a product of right-handed Dehn twists.  Using chain relations and braid relations in the mapping class group, we can factor the monodromy $\psi_{-4}$ into a product of right-handed Dehn twists as follows:

\begin{equation*}
\begin{aligned}
\psi_{-4}&=t_{\delta_1}t_{\delta_2}t_{\alpha_1}^{-6}t_{\alpha_2}^{-2}\\
&=t_{\alpha_1}^{-4}t_{\alpha_1}t_{\epsilon}t_{\alpha_2}t_{\alpha_1}t_{\epsilon}t_{\alpha_2}t_{\alpha_1}t_{\epsilon}t_{\alpha_2}t_{\alpha_1}t_{\epsilon}
t_{\alpha_2}t_{\alpha_1}^{-1}t_{\alpha_2}^{-2}t_{\alpha_1}^{-1}\\
&=t_{\alpha_1}^{-3}t_{\epsilon}t_{\alpha_2}t_{\alpha_1}t_{\epsilon}t_{\alpha_2}t_{\alpha_1}t_{\epsilon}t_{\alpha_2}t_{\alpha_1}t_{\epsilon}
t_{\alpha_1}^{-1}t_{\alpha_2}^{-1}t_{\alpha_1}^{-1}\\
&=(t_{\alpha_1}^{-2}t_{\epsilon}t_{\alpha_1}^{2})t_{\alpha_2}t_{\epsilon}(t_{\alpha_1}t_{\alpha_2}t_{\alpha_1}t_{\epsilon}t_{\alpha_1}^{-1}t_{\alpha_2}^{-1}t_{\alpha_1}^{-1}).
\end{aligned}
\end{equation*}
\end{proof}

\subsection{Proof of Theorem~\ref{mainthm}}
The torus bundle $M_{n}$ has monodromy $\left(
  \begin{array}{cc}
    -1 & -n \\   0 & -1 \\
  \end{array}
\right)$. Let $T^2\times\{ p\}$ ($p\in [0,1]$) be a pre-Lagrangian fiber torus in the contact manifold $(M_{n}, \xi_{n})$ such that $\mu\times\{ p\}\subset T^2\times\{ p\}$ is Legendrian (for the definition of $\mu$, see Section 2).
Denote $\mu\times\{ p\}$ by $K$. Note that the contact framing of $K$ coincides with its framing induced by the pre-Lagrangian fiber torus containing it.

 Suppose now that $n\leq-5$. Let $(M_{n}\times [0,1], \omega_n)$ be a symplectization of $(M_n, \xi_n)$. Attaching $(-n-4)$ Weinstein 2-handles to $(M_{n}\times [0,1], \omega_n)$ along $(-n-4)$ parallel copies of $K\times\{ 1\}$ in the fiber torus $T^2\times\{ p\}\times\{ 1\}$ in $M_{n}\times\{1\}$, by \cite[Proposition 2.1]{EtH2}, we obtain a Stein cobordism $W$ such that the concave end is $(M_n, \xi_n)$. By Proposition~\ref{Lsurgery}, the convex end of $W$ is $(M_{-4}, \xi_{-4})$.

\begin{lemma}\label{euler}
$b_{2}^{-}(W)=-n-4$ and $b_{2}^{+}(W)=0$.
\end{lemma}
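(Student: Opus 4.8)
The plan is to compute $H_*(W)$ from the long exact sequence of the pair $(W,M_n\times[0,1])$ and then to analyze the intersection form on $H_2(W)$ directly. The first step is to record the homology class of the attaching curve $K=\mu\times\{p\}$. Writing $A=-T^n=\begin{pmatrix}-1 & -n\\ 0 & -1\end{pmatrix}$ and using the Wang sequence of the fibration $T^2\hookrightarrow M_n\to S^1$, one finds that the fiber inclusion carries $H_1(T^2)=\Z^2$ onto the torsion subgroup $\mathrm{coker}(A_*-\mathrm{Id})$ of $H_1(M_n)$, where $A_*-\mathrm{Id}=\begin{pmatrix}-2 & -n\\ 0 & -2\end{pmatrix}$. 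Since the first column shows $2\begin{pmatrix}1\\0\end{pmatrix}$ lies in the image lattice, the class $[K]=[\mu]$ is a nonzero element of order $2$. The same sequence gives $H_2(M_n)\cong\Z$, generated by a fiber torus $T=[T^2\times\{p\}]$, and $b_1(M_n)=1$.

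Next I would feed this into the long exact sequence. Writing $X=M_n\times[0,1]$ and $k=-n-4\ge1$, excision gives $H_2(W,X)\cong\Z^k$ and $H_3(W,X)=H_1(W,X)=0$, with connecting map $\partial\colon\Z^k\to H_1(M_n)$ sending each generator to $[K]$. As $[K]$ has order $2$, the map $(a_1,\dots,a_k)\mapsto(\sum a_i)[K]$ has image $\Z/2$ and kernel the index-two sublattice $L=\{\sum a_i\equiv0\bmod 2\}\cong\Z^k$. Because $H_3(W,X)=0$, the inclusion $H_2(X)\hookrightarrow H_2(W)$ is injective, so the sequence reduces to $0\to\Z\xrightarrow{T}H_2(W)\to L\to 0$, whence $H_2(W)\cong\Z^{k+1}$ and $b_1(W)=b_1(M_n)=1$. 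Thus $H_2(W)$ splits as the fiber torus $T$ together with a rank-$k$ lattice of handle classes represented by genuine closed surfaces; concretely $e_i-e_{i+1}\in L$ is carried by the sphere $S_i=D_i\cup A_i\cup\overline{D_{i+1}}$, where $D_i$ is the core of the $i$-th handle and $A_i\subset T^2\times\{p\}\times\{1\}$ is the annulus between two adjacent parallel copies of $\mu$.

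The crux is the intersection form. The fiber torus satisfies $T\cdot T=0$, and since each handle class has a representative supported near a single fiber while $T$ can be pushed to a disjoint fiber $T^2\times\{p'\}$, one has $T\cdot(-)=0$; hence $T$ spans a rank-one radical. On the handle classes I would compute the pairing geometrically: each is a Weinstein $2$-handle, so its attaching framing is the contact framing minus one, which by the observation preceding the lemma equals the fiber-torus framing minus one, i.e.\ $-1$ relative to the torus framing; and two distinct parallel copies of $\mu$ are torus-framed pushoffs of one another, so their mutual contribution vanishes. Relative to the chosen cappings the pairing of the $k$ handle generators is therefore $-I_k$, which is negative definite, and restricting it to the finite-index sublattice $L$ keeps it negative definite of rank $k$. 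Since $T$ is orthogonal to all handle classes, the intersection form of $W$ is the orthogonal sum of a zero form of rank one and a negative definite form of rank $k$, so $b_2^+(W)=0$ and $b_2^-(W)=k=-n-4$.

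The main obstacle is this last step, precisely because $K$ is homologically essential (of order two) rather than nullhomologous: one cannot read $b_2^{\pm}$ off a linking matrix on all of $\Z^k$ but must work with the actual closed surfaces spanning $L$ and confirm that the radical is exactly $\langle T\rangle$ (nullity one). As consistency checks I would verify $\chi(W)=k$, since the handles contribute $k$ and $\chi(M_n\times[0,1])=0$, which together with $b_1(W)=1$ and $b_2(W)=k+1$ forces $b_3(W)=1$, and I would note that the resulting signature $\sigma(W)=-k=n+4$ is compatible with the Weinstein (hence almost-complex) structure on $W$.
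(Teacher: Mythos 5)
Your proof is correct and takes essentially the same route as the paper's: a rank count showing $H_2(W;\R)$ is spanned by the fiber class together with $k=-n-4$ handle classes, the fiber class lying in the radical of the intersection form, and negativity of the handle classes coming from the $-1$ framing relative to the fiber-torus framing together with the order-two relation $2[K]=0$ in $H_1(M_n;\Z)$. In fact your long exact sequence computation substantiates a step the paper only asserts, namely that $h_0,[S_1],\ldots,[S_k]$ freely generate $H_2(W;\R)$. The one soft spot is at the end: your explicit spheres $D_i\cup A_i\cup\overline{D_{i+1}}$ realize only the sum-zero sublattice of $L$, which has rank $k-1$, so by themselves they certify only $b_2^-(W)\ge k-1$; to conclude that the pairing on all of $L$ is the restriction of $-I_k$ (hence negative definite of full rank $k$) you still need a capped representative of a class such as $2e_1$, and your appeal to ``the actual closed surfaces spanning $L$'' leaves that representative unconstructed. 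The missing surface is exactly the paper's $S_i$: since the monodromy $-T^n$ sends $\mu$ to $-\mu$, the quotient of $\mu\times[p,p+1]$ is an annulus in $M_n$ whose boundary is two coherently oriented copies of $K_i$, and capping with two copies of the core disk yields pairwise disjoint closed surfaces with $[S_i]\cdot[S_j]=-4\delta_{ij}$. The paper's choice buys a cleaner endgame: these surfaces already diagonalize the form over $\R$ as $0\oplus(-4I_k)$, so no analysis of the integral lattice $L$ or of mutual linking corrections is needed, whereas your version needs the vanishing-linking claim for torus-framed pushoffs to hold on a full generating set of $L$.
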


\begin{proof}
If $n$ is odd, then $H_1(M_n;\Z)\cong\Z\oplus\Z_4$. If $n$ is even, then $H_1(M_n;\Z)\cong\Z\oplus\Z_2\oplus\Z_2$. So $b_1(M_n)=1$, and hence $b_2(M_n)=1$.
Denote a generator of $H_2(M_n\times [0,1]; \R)\cong\R$ by $h_0$.
The inclusion $i:M_{n}\times [0,1]\to W$ induces an injection
$i_{\ast}:H_{2}(M_{n};\R)\rightarrow H_{2}(W;\R)$.
By abuse of notation, $i_{\ast}(h_0)$ is still denoted by $h_0$.
Obviously, for any element $h$ in $H_{2}(W;\R)$, $h_0\cdot h=0$.

Let $[K]\in H_{1}(M_n;\Z)$ denote the homlogy class of $K$. In
$H_1(M_n,\Z)$, we have $2[K]=0$.
There is an oriented annulus in $M_A$ whose boundary consists of two copies of the oriented
$K$. In fact it comes from $\mu\times[0,1]\subset T^2\times [0,1]$ by quotient.
The framing of $K$ induced by the annulus coincides with that induced by
the fiber torus $T^2\times\{ p\}$ containing $K$.

Let $K_1,\cdots,K_{-n-4}$ be the $(-n-4)$ parallel copies of $K\times\{ 1\}$ along which we attach the
$(-n-4)$ Weinstein $2$-handles. For each $i=1,\ldots,-n-4$, there is an oriented annulus $A_i$ in
$M_n\times\{ 1\}$ with $\partial A_i=2K_i$. Let $S_i$ be the oriented surface
which is the union of $A_i$ and two copies of the core disk of the Weinstein $2$-handle attached along $K_i$.
Let $[S_i]\in H_2(W;\R)$ denote the homology class of $S_i$. Then $h_0,[S_1],\ldots,[S_{-n-4}]$ free
generate $H_2(W;\R)$.
Since the $2$-handles are attached to $K_1,\ldots,K_{-n-4}$ with framing $-1$ with respect to the
framing induced by the annuli $A_1,\ldots,A_{-n-4}$,
for $i, j=1,\cdots, -n-4$, we have $$[S_i]\cdot [S_j]=\left\{
\begin{aligned}
-4, &  & i=j, \\
0, &  & i\neq j.
\end{aligned}
\right.$$
Thus $b_{2}^{-}(W)=-n-4$ and $b_{2}^{+}(W)=0$.
\end{proof}

Suppose that $W_n$ is a strong symplectic filling of the contact manifold $(M_n, \xi_n)$.
Without loss of generality, we assume that $W_n$ is minimal.
Then, by Proposition~\ref{minimal}, the union of  $W_{n}$ and $W$ is a minimal strong symplectic filling of $(M_{-4}, \xi_{-4})$. By  \cite[Theorem 3.5]{GoLi1} and its proof, all minimal strong symplectic fillings of the contact manifold $(M_{-4}, \xi_{-4})$ have vanishing $b_2^-$. Indeed,  the union of a minimal strong symplectic filling and the symplectic cap in \cite[Figure 3]{GoLi1} is either $\mathbb{C}P^2\sharp \overline{\mathbb{C}P^2}$ or $S^2\times S^2$.  Since that symplectic cap,  $\mathbb{C}P^2\sharp \overline{\mathbb{C}P^2}$ and $S^2\times S^2$  all have $b_2^-=1$, by Lemma~\ref{b2}, the minimal strong symplectic filling of $(M_{-4}, \xi_{-4})$ has $b_2^-=0$.  Hence $b_2^-(W_{n}\cup W)=0$.   So by Lemma~\ref{b2}, $b_2^-(W)=0$.  This contradicts Lemma~\ref{euler} since $n\le -5$. Thus $(M_n, \xi_n)$ is not strongly symplectically fillable for $n\le -5$.

\subsection{Proof of Theorem \ref{hyperbolic}}

Let $A=\left( \begin{array}{ll} x & y \\ z & w
\end{array}\right)\in SL(2, \mathbb{Z})$. Assume that $\tr (A)=x+w\le -3$, i.e., $A$ is negative hyperbolic.
Let $T^2\times\{ p\}$ ($p\in [0,1]$) be a pre-Lagrangian fiber torus in the contact manifold $(M_A,\xi_A)$ such that
$\lambda\times \{ p\}\subset T^2\times\{ p\}$ is Legendrian (for the definition of $\lambda$, see Section 2).
Denote $\lambda\times\{ p\}$ by $K$. Let $(M_A\times [0,1],\omega_A)$ be
a symplectization of $(M_A,\xi_A)$. Attaching a Weinstein 2-handle
to $(M_A\times [0,1], \omega_A)$ along $K\times\{ 1\}$ in the
fiber torus $T^2\times\{ p\}\times \{ 1\}$ in $M_A\times\{1\}$, by
\cite[Proposition 2.1]{EtH2}, we obtain a Stein cobordism $W'$
such that the concave end is $(M_A,\xi_A)$. Let $A'=\left(
\begin{array}{ll} x & y \\ z & w
\end{array}\right)\left(
\begin{array}{ll} 1 & 0 \\ -1 & 1
\end{array}\right)=\left(
\begin{array}{ll} x-y & y \\ z-w & w
\end{array}\right)$. Suppose that $\tr (A')=x+w-y\le -3$. Then
by Proposition~\ref{Lsurgery}, the convex end of $W'$ is $(M_{A'},\xi_{A'})$.
Denote $\mu\times\{ 0\}$ ($\subset T^2\times\{ 0\}\subset M_A$) by $\mu_0$ and
$\lambda\times\{ 0\}$ ($\subset T^2\times\{ 0\}\subset M_A$) by $\lambda_0$.
Denote $\mu\times\{ 1\}$ ($\subset T^2\times\{ 1\}\subset M_A$) by $\mu_1$ and
$\lambda\times\{ 1\}$ ($\subset T^2\times\{ 1\}\subset M_A$) by $\lambda_1$.

\begin{lemma}\label{b2+-}
Assume that $\mathrm{tr}(A')\le -3$, then $b_2^+(W')=0$ and
$b_2^-(W')=1$.
\end{lemma}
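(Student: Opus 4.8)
The plan is to compute the rational homology of $W'$ and its intersection form exactly as in the proof of Lemma~\ref{euler}, the only differences being that a single handle is attached and that the relevant boundary class is $[\lambda]$ rather than $[\mu]$. First I would pin down $b_2(M_A)$. Since $\det(A-I)=\det A-\tr(A)+1=2-\tr(A)\ge 5$ whenever $\tr(A)\le -3$, the endomorphism $A-I$ of $H_1(T^2;\Z)$ is injective with finite cokernel, so the Wang sequence of the bundle $M_A\to S^1$ yields $H_1(M_A;\Z)\cong\Z\oplus\mathrm{coker}(A-I)$ with $\mathrm{coker}(A-I)$ finite; in particular $b_1(M_A)=1$ and, by Poincar\'e duality, $b_2(M_A)=b_1(M_A)=1$. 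Writing $h_0$ for a generator of $H_2(M_A;\R)\cong\R$, the inclusion $M_A\times[0,1]\hookrightarrow W'$ induces an injection on $H_2(\,\cdot\,;\R)$ (the only handle attached has index $2$), and, exactly as in Lemma~\ref{euler}, the image of the fiber class, still denoted $h_0$, satisfies $h_0\cdot h=0$ for every $h\in H_2(W';\R)$, since it is carried by a fiber torus that may be pushed off the handle.

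Next I would show $b_2(W')=2$. As $W'$ is obtained from $M_A\times[0,1]$ by attaching one Weinstein $2$-handle, the long exact sequence of the pair $(W',M_A\times[0,1])$ has $H_2(W',M_A\times[0,1];\R)\cong\R$, generated by the core of the handle, and the connecting homomorphism into $H_1(M_A\times[0,1];\R)$ sends this generator to the class of the attaching circle $K=\lambda\times\{p\}$. But $[K]=[\lambda]$ lies in the image of $H_1(T^2)\to H_1(M_A)$, which is $\mathrm{coker}(A-I)$ and hence torsion, so $[K]=0$ in $H_1(M_A;\R)$ and the connecting map vanishes. Combined with the injectivity above, this gives $b_2(W')=2$, with a basis consisting of $h_0$ and a class $[S]$ mapping onto a generator of the relative group.

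Finally I would compute $[S]\cdot[S]$. Let $r$ be the order of $[K]$ in $H_1(M_A;\Z)$ and choose a surface $F\subset M_A\times\{1\}$ with $\partial F=rK$, arranged so that near its boundary it consists of $r$ parallel copies of $\lambda$ lying in the fiber torus, so that the framing $F$ induces on $K$ is the fiber-torus framing. As recorded before Proposition~\ref{Lsurgery}, the fiber-torus framing of $K$ is its contact framing, and the Weinstein handle is attached with framing $-1$ relative to it. Capping $F$ off with $r$ parallel copies of the core disk produces a closed surface $S$ representing the basis class above, and the same framing bookkeeping as in Lemma~\ref{euler} gives $[S]\cdot[S]=r^2\cdot(-1)=-r^2<0$. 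Since $h_0\cdot h_0=0$ and $h_0\cdot[S]=0$, the intersection form of $W'$ in the basis $\{h_0,[S]\}$ is $\mathrm{diag}(0,-r^2)$, so that $b_2^+(W')=0$ and $b_2^-(W')=1$, as claimed.

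The step I expect to demand the most care is the framing computation $[S]\cdot[S]=-r^2$: one must check that the bounding surface $F$ can indeed be taken to induce the fiber-torus framing on $K$, so that no extra twisting of $F$ relative to the fiber direction corrupts the count. This is the hyperbolic counterpart of the annulus-framing identification used in Lemma~\ref{euler}, now complicated by the fact that $[\lambda]$ may have order greater than two; all that is finally needed for the conclusion, however, is the weaker assertion $[S]\cdot[S]<0$, which follows as soon as the handle framing is negative relative to the $F$-framing.
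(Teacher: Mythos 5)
There is a genuine gap, and it sits exactly at the step you flagged: the claim that the bounding surface $F$ with $\partial F=rK$ can be arranged to induce the fiber-torus framing on $K=\lambda\times\{p\}$. This is homologically impossible here. Writing $A=\left(\begin{smallmatrix} x & y\\ z & w\end{smallmatrix}\right)$, the relations in $H_1(M_A\setminus\Int(\nu(K));\Z)$ are $[\mu_1]=x[\mu_0]+z[\lambda_0]$, $[\lambda_1]=y[\mu_0]+w[\lambda_0]$, $[\lambda_0]=[\lambda_1]=[\lambda']$, $[\mu']=[\mu_1]-[\mu_0]$, which give $(2-x-w)[\lambda']+y[\mu']=0$; so the rational longitude of $K$ is (proportional to) $(2-x-w)\lambda'+y\mu'$, not $\lambda'$, unless $y=0$ --- and $y=0$ is ruled out by $\tr(A)\le -3$ (a triangular $A\in SL(2,\Z)$ has trace $\pm 2$). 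Concretely: in Lemma~\ref{euler} the annulus with boundary $2K$ exists because the monodromy there sends $\mu$ to $-\mu$, so $\mu\times[0,1]$ closes up and genuinely induces the fiber framing; for $K=\lambda$ in the hyperbolic case, $A\lambda$ is not $\pm\lambda$, and any rational Seifert surface must wind $y$ times in the meridian direction along $\partial\nu(K)$. The paper handles this via Lemma~\ref{selfinter}, which invokes \cite[Lemma 5.1]{MT} for rationally null-homologous knots and yields
$$[S]\cdot[S]=-(2-x-w)(2-x-w+y)=-(2-\tr(A))(2-\tr(A')),$$
not $-r^2$.

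A telltale sign of the error is that your argument never uses the hypothesis $\tr(A')\le -3$: your conclusion $[S]\cdot[S]=-r^2<0$ would hold unconditionally, whereas the correct formula has sign depending on $2-\tr(A')$, and negativity of $[S]\cdot[S]$ is exactly equivalent to $\tr(A')<2$ (given $\tr(A)\le -3$). If $2-\tr(A')<0$ one would instead get $b_2^+(W')=1$ and $b_2^-(W')=0$, so the lemma would actually fail without the hypothesis. Your fallback remark --- that only negativity of the handle framing relative to the $F$-framing is needed --- does not rescue the argument, because the handle framing is $-1$ relative to the \emph{fiber-torus} framing; relative to the framing induced by $F$ (the rational longitude) it is shifted by the $y$-dependent correction, which is precisely where $\tr(A')$ enters. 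The rest of your proof (the Wang-sequence computation of $b_1(M_A)=1$, the injectivity of $i_*$ on $H_2$, $h_0\cdot h=0$, and $b_2(W')=2$ via the pair $(W',M_A\times[0,1])$) matches the paper and is fine.
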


\begin{proof} Since $b_1(M_A)=1$, $b_2(M_A)=1$. Denote a generator of $H_{2}(M_A\times [0,1];
\R )\cong\R $ by $h_0$. The inclusion $i:M_A\times [0,1]\rightarrow W'$ induces an injection $i_{\ast}:H_{2}(M_A;\R )\rightarrow
H_{2}(W';\R )$. By abuse of notation, $i_{\ast}(h_0)$ is still denoted by $h_0$. Obviously, for any element $h$ in
$H_{2}(W';\R )$, $h_0\cdot h=0$.

Let $[\mu_0],[\lambda_0],[K]\in H_1(M_A;\Z)$ denote the homology classes of $\mu_0,\lambda_0,K$. In $H_{1}(M_A;\Z )$, we have $[\mu_0]=x[\mu_0]+z[\lambda_0],[\lambda_0]=y[\mu_0]+w[\lambda_0]$. Thus $(2-x-w)[\lambda_0]=0$.
Since $[K]=[\lambda_0]$, $(2-x-w)[K]=0$. Let $C$ be a $2$-chain in $M_A$ with $\partial C=(2-x-w)K$. Let $S$ be the oriented surface which is
the union of $C\times\{ 1\}$ ($\subset M_A\times\{ 1\}$) and $2-x-w$ copies of the core disk of the attached Weinstein $2$-handle.
Let $[S]\in H_2(W';\R)$ denote the homology class of $S$. Then $h_0,[S]$ free generate $H_2(W';\R)$. By  Lemma~\ref{selfinter} below, $[S]\cdot[S]=-(2-x-w)(2-x-w+y)=-(2-\tr (A))(2-\tr (A'))<0$. Therefore, $b_2^+(W')=0$ and $b_2^-(W')=1$.
\end{proof}

\begin{lemma}\label{selfinter}
$[S]\cdot [S]=-(2-x-w)(2-x-w+y)$.
\end{lemma}

\begin{proof} Without loss of generality, we assume that $0<p<1$ . Since $(2-x-w)[K]=0$ in $H_1(M_A;\Z)$, $K$ is a rationally null-homologous knot in $M_A$. Denote a
closed regular neighborhood of $K$ in $M_A$ by $\nu (K)$.  Let $\lambda'\subset\partial \nu (K)$ be a
longitude for $K$ determined by the framing induced by the fiber torus $T^2\times\{ p\}$ containing $K$. Let $\mu'\subset\partial\nu(K)$ be a meridian of $\nu(K)$ oriented such that the intersection number $\mu'\cdot\lambda'=1$ on $\partial\nu(K)$.
In $H_1(M_A\setminus\mathrm{Int}(\nu(K));\Z)$, we have
$[\mu_1]=x[\mu_0]+z[\lambda_0],[\lambda_1]=y[\mu_0]+w[\lambda_0],[\lambda_1]=[\lambda_0]=[\lambda']$ and $[\mu']=[\mu_1]-[\mu_0]$.
So in $H_1(M_A\setminus\mathrm{Int}(\nu(K));\Z)$, $(2-x-w)[\lambda']+y[\mu']=0$. Since the $2$-handle is attached to $K\times\{ 1\}$ with framing
$-1$ with respect to the framing induced by the fiber torus $T^2\times\{ p\}\times\{ 1\}$ containing it,  the Lemma follows from \cite[Lemma 5.1]{MT}.
\end{proof}

Let
$$d=(n_1+3,\underbrace{2,\ldots,2}_{m_1}, n_2+3,\underbrace{2,\ldots,2}_{m_2},\ldots, n_s+3,\underbrace{2,\ldots,2}_{m_s}),\ \ m_i,n_i\ge 0,s\ge 1.$$  Suppose that $s\ge 2$. Let $$d'=(\underbrace{2,\ldots,2}_{m_{1}+1}, n_2+3,\underbrace{2,\ldots,2}_{m_2},\ldots, n_s+3,\underbrace{2,\ldots,2}_{m_s}).$$
Suppose that $T^2\times\{ p\}$ ($p\in [0,1]$) is a pre-Lagrangian fiber torus in the contact manifold $(M_{-A(d)},\xi_{-A(d)})$ such that
$\lambda\times \{ p\}\subset T^2\times\{ p\}$ is Legendrian.
Let $(M_{-A(d)}\times [0,1], \omega_{-A(d)})$ be a symplectization of $(M_{-A(d)},\xi_{-A(d)})$.  As before, attaching $n_{1}+1$ Weinstein 2-handles (if $s=1$, attaching $n_1$ Weinstein $2$-handles) to $(M_{-A(d)}\times [0,1], \omega_{-A(d)})$ along parallel copies of $\lambda\times\{ p\}\times\{ 1\}$ in the fiber torus
$T^2\times\{ p\}\times\{ 1\}$ in $M_{-A(d)}\times\{1\}$, we obtain a Stein cobordism such that the concave end is $(M_{-A(d)},\xi_{-A(d)})$ and the convex end is $(M_{-A(d')},\xi_{-A(d')})$ by Proposition~\ref{Lsurgery}.   Let $$d''=(n_2+3,\underbrace{2,\ldots,2}_{m_2},\ldots, n_s +3,\underbrace{2,\ldots,2}_{m_{s}+m_{1}+1}),$$ then $(M_{-A(d')},\xi_{-A(d')})=(M_{-A(d'')},\xi_{-A(d'')})$. So we can attach Weinstein $2$-handles as before.
After successively attaching $n_1+n_2+\cdots +n_s+s-1$ Weinstein 2-handles, we obtain a Stein cobordism $W$ with the concave end $(M_{-A(d)},\xi_{-A(d)})$ and the convex end $(M_{-A(d_0)},\xi_{-A(d_0)})$, where $$d_0=(3,\underbrace{2,\ldots,2}_{m_1+m_2+\cdots +m_s+s-1}).$$  By Lemma~\ref{b2+-} and Lemma~\ref{b2}, we have $b_2^-(W)\ge n_1+n_2+\cdots +n_s+s-1$.

Suppose that $W_{-A(d)}$ is a minimal strong symplectic filling of the contact manifold $(M_{-A(d)},\xi_{-A(d)})$. Then by
Proposition~\ref{minimal},  $W_{-A(d)}\cup W$ is a minimal strong symplectic filling of $(M_{-A(d_0)},$
$\xi_{-A(d_0)})$. Let $c=m_1+m_2+\cdots +m_s+s+2$. Then $M_{-A(d_0)}=-M_{-A(c)}$. By \cite[Theorem 3.1]{GoLi1}, the contact manifold $(M_{-A(d_0)}, \xi_{-A(d_0)})$ admits a unique minimal strong symplectic filling up to orientation preserving diffeomorphism, which is the complement of the interior of a closed regular neighborhood $\tilde{W}$ of a spherical complex divisor $D$ in $\mathbb{C}P^2\#
(c+2)\overline{\mathbb{C}P^{2}}$. By the construction of $D$ (see \cite[Lemma 2.4]{GoLi1}), $b_2^-(\tilde{W})=1$.
Hence by Lemma~\ref{b2}, $b_2^-(W)\le b_2^-(W_{-A(d)}\cup W)\le b_2^-(\mathbb{C}P^2\#
(c+2)\overline{\mathbb{C}P^{2}})-b_2^-(\tilde{W})=c+2-1=c+1$. So $n_1+n_2+\cdots +n_s+s-1\le m_1+m_2+\cdots
+m_s+s+2+1$, i.e., $n_1+n_2+\cdots +n_s\le m_1+m_2+\cdots +m_s+4$,
concluding the proof of Theorem \ref{hyperbolic}.

\subsection{Proof of Proposition \ref{hyperbolic1}}
It suffices to show that if $n_1\le m_1+4$, then $\xi_{-A(d)}$ is strongly symplectically fillable.

If $2\le n_{1}\le m_{1}+4$, then  $(n_{1}-1, 1, \underbrace{2,\ldots,2}_{n_{1}-2}, 1)\prec \rho(d)=(m_{1}+3,\underbrace{2,\ldots,2}_{n_{1}})$. Since $(n_{1}-1, 1, \underbrace{2,\ldots,2}_{n_{1}-2}, 1)$ is a blowup of $(0,0)$, $d$ is embeddable. We refer the reader to \cite{GoLi1}  for the notation ``$\prec$" and the definition of ``blowup".  If $n_{1}=1$, then $(0, 0)\prec\rho(d)=(m_{1}+3, 2)$. So $d$ is also embeddable. By \cite[Theorem 1.2 and Theorem 2.5(v)]{GoLi1},  for $n_1\ge 1$, $\xi_{-A(d)}$ is strongly symplectically fillable.  If $n_{1}=0$, then $\rho(d)=(m_{1}+3)$. By \cite[Theorem 1.2 and Theorem 2.5(iv)]{GoLi1} (\cite[Theorem 1.2]{GoLi1} is still true if
$d=(3,\underbrace{2,\ldots,2}_{m_1})$),  $\xi_{-A(d)}$ is strongly symplectically fillable.

\section{Virtually overtwisted contact torus bundles}

In this section, we prove Proposition~\ref{virt.over.}.
Since $-T^{n}=(-S)^{2}T^{n}$ is conjugate to $(-S)T^{n}(-S)$, by \cite[Theorem A.4]{KM}, $M_n$ can be obtained by surgery of $S^3$
 along the framed link shown in the left of  Figure~\ref{fig:kdiagram}. We isotope the framed link to the right of Figure~\ref{fig:kdiagram}. So we can turn the topological surgery diagram in Figure~\ref{fig:kdiagram} to a  Legendrian link diagram in standard form (cf. \cite{Gom} or \cite{GomS}) shown in Figure~\ref{fig:lhddiagram}.   The Legendrian knot $K_0$ in Figure~\ref{fig:lhddiagram} has $\tb (K_0)=1$.  Performing $-n$ positive or negative stabilizations on $K_0$, we obtain a Legendrian knot $K'_{0}$ which has $\tb (K'_{0})=n+1$. Attaching a Weinstein $2$-handle along $K'_{0}$ yields a Stein domain $N$ which is a Stein filling of a contact structure $\xi$ on $M_{n}$.
 It is easy to know that  $H_1(N; \Z )\cong\Z\oplus\Z_2$.

\begin{figure}[htb]
\begin{overpic}
{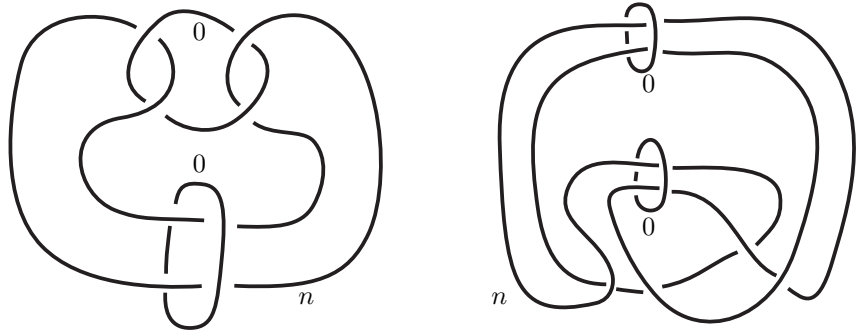}
\put(70, 60){$0$}
\put(70, 110){$0$}
\put(183, 10){$n$}
\put(240, 36){$0$}
\put(240, 90){$0$}
\put(110, 10){$n$}

\end{overpic}
\caption{An isotopy of topological surgery diagrams.}
\label{fig:kdiagram}
\end{figure}

\begin{figure}[htb]
\begin{overpic}
{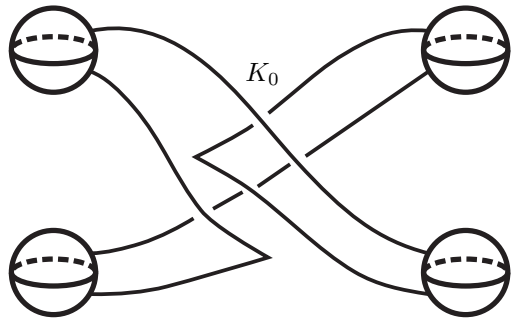}
\put(90, 90){$K_{0}$}
\end{overpic}
\caption{A Legendrian link diagram in standard form, where $K_0$ is a Legendrian knot with $\tb (K_0)=1$, $\rot (K_0)=0$.}
\label{fig:lhddiagram}
\end{figure}

Let $\widetilde{M_n}$ denote the double cover of $M_n$ corresponding to the epimorphism $$\phi:\pi_1(M_n)\stackrel{pr}{\to}\pi_1(S^1)\cong\Z\stackrel{\beta}{\to}\Z_2,$$  where $pr$ is induced by the projection of the $T^2$-bundle and $\beta:\Z\to\Z_2$ denotes the homomorphism which sends $1$ to the generator of $\Z_2$. Then $\widetilde{M_n}$ is a $T^2$-bundle over $S^1$ with monodromy $T^{2n}$.

\begin{lemma} \label{doublecover1}
If $\xi$ is a universally tight contact structure on $M_n$, then the lift of  $\xi$ to $\widetilde{M_n}$ is not strongly symplectically fillable.
\end{lemma}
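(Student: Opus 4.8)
The plan is to realize the lifted contact structure $\tilde\xi$ explicitly on $\widetilde{M_n}=P_{2n}$ and to show that it carries positive Giroux torsion, whence it cannot be strongly symplectically fillable by \cite[Corollary 3]{Ga3}. First I would reduce to a standard model: by \cite[Theorem 0.1]{H}, every universally tight contact structure on $M_n$ is isotopic to some $\zeta(\phi)$ with $\sup_t(\phi(t+1)-\phi(t))\in[m\pi,(m+1)\pi)$ for an odd integer $m\ge 1$. Since $M_n$ and $\widetilde{M_n}$ are the quotients of the same $(T^2\times\R,\tilde\zeta(\phi))$ by the transformation $(\mathbf{x},t)\mapsto(-T^n\mathbf{x},t-1)$ and by its square $(\mathbf{x},t)\mapsto(T^{2n}\mathbf{x},t-2)$ respectively, the lift $\tilde\xi$ descends from the very same $\tilde\zeta(\phi)$; after rescaling $t$ to normalize the period it is the model $\zeta(\psi)$ on $P_{2n}$ with $\psi(s)=\phi(2s)$. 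In particular $\tilde\xi$ is again universally tight, which one also sees abstractly because the universal cover of $\widetilde{M_n}$ coincides with that of $M_n$, so tightness of the pullback to the universal cover is inherited.

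Next I would compute the twisting of $\tilde\xi$ in the $S^1$-direction, which equals $\sup_t(\phi(t+2)-\phi(t))$. The monodromy $-T^n$ fixes the horizontal line and restricts to $-I$ there, so it sends the ray $\Delta_0$ to $\Delta_\pi$. Tracking the defining condition $(-T^n)(\Delta_{\phi(t)})=\Delta_{\phi(t-1)}$ at a parameter $t_0$ with $\phi(t_0)=0$, one finds $\phi(t_0-1)=-\pi$ and $\phi(t_0+1)=\pi$, and then $\phi(t_0+2)=2\pi$; that is, along the fixed direction $\phi$ increases by exactly $\pi$ on each unit interval, hence by exactly $2\pi$ over the interval $[t_0,t_0+2]$. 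Therefore $\sup_t(\phi(t+2)-\phi(t))\ge 2\pi$, so $\tilde\xi$ is a universally tight contact structure on $P_{2n}$ with twisting at least $2\pi$ in the $S^1$-direction.

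Finally, since $2n<0$, a universally tight contact structure on $P_{2n}$ with twisting at least $2\pi$ contains a full $2\pi$-torsion domain; this is precisely the mechanism by which $\eta_{2n}$ acquires positive Giroux torsion when $2n<0$ (compare the remark on $\eta_n$ in the introduction, where the sign of the exponent is exactly what distinguishes the fillable case $n\ge 0$ from the non-fillable case $n<0$). Consequently $\tilde\xi$ has positive Giroux torsion and, by \cite[Corollary 3]{Ga3}, is not strongly symplectically fillable, which proves the lemma.

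The step I expect to be the main obstacle is the middle one: one must control the defining function $\phi$ (or at least its behaviour along the fixed direction of the monodromy) carefully enough to guarantee that the twisting of the lift is genuinely $\ge 2\pi$, and then convert ``twisting $\ge 2\pi$ with negative monodromy'' into the existence of an honest $2\pi$-torsion domain rather than merely a large twisting parameter. The role of the sign of $2n$ is essential here, and I would want to make the dependence on it fully explicit, since the \emph{same} twisting bound on $P_n$ with $n\ge 0$ produces a Stein fillable structure.
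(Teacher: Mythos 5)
Your argument covers only half of Honda's classification, and the missing half is exactly where the paper has to do extra work. Your opening reduction --- that by \cite[Theorem 0.1]{H} every universally tight contact structure on $M_n$ is isotopic to some $\zeta(\phi)$ with twisting $m\pi$, $m\ge 1$ odd --- is not correct: on parabolic torus bundles Honda's classification also contains universally tight contact structures that are \emph{minimally twisting} in the $S^1$-direction (twisting in $[0,\pi)$), given by a positive integer $\mu'$ when $\mu'$ is odd and by $(\mu',\pm)$ when $\mu'$ is even. These are not of the form $\zeta(\phi)$ with $m\ge 1$, so your computation of the twisting of the lift (tracking $\phi$ along the ray fixed by $-T^n$, which is the paper's first case in all but notation) simply does not apply to them. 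The paper's proof treats them separately: the lift of such a structure to $\widetilde{M_n}$ is again minimally twisting, with Honda data $(\mu',l')$ for some integer $l'$, and one must then invoke the parabolic-specific fact that on the bundle with monodromy $T^{2n}$ such a minimally twisting structure is \emph{contactomorphic} (via a diffeomorphism not isotopic to the identity, so this is invisible to any fiberwise twisting computation) to a universally tight structure with $\beta_{S^1}\ge 2\pi$; only after that does the Giroux-torsion mechanism kick in. Without this case your proof establishes the lemma only for the lifts of the structures $\zeta(\phi)$, which is insufficient for the application in Section 4: identifying the Stein fillable structure $\xi$ with $\xi'_n$ requires excluding \emph{every} universally tight structure on $M_n$, minimally twisting ones included.

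For the case you do treat, your route agrees with the paper's: the lift descends from the same $\tilde\zeta(\phi)$, has twisting $\ge 2\pi$ in the $S^1$-direction, and since $n<0$ this forces positive Giroux torsion, so non-fillability follows from \cite[Corollary 3]{Ga3}. Your closing caveats (that ``twisting $\ge 2\pi$'' must be converted into an embedded full torsion domain, and that the sign of $n$ is what makes this possible) are well placed --- the paper compresses this to ``straightforward to check'' --- but they are refinements of a correct step, whereas the omission of the minimally twisting universally tight structures is a genuine gap in the case analysis.
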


\begin{proof}
According to Honda's classification \cite[Theorem 0.1]{H}, we divide the proof into two cases.
If $\xi$ is a universally tight contact structure on $M_n$ with twisting in $S^{1}$-direction $\beta_{S^{1}}\ge \pi$, then the lift of $\xi$ to $\widetilde{M_n}$ is universally tight with $\beta_{S^{1}}\ge 2\pi$.   Explicitly, the lift of $\xi$  can be written as given by the following $1$-form on $(T^{2}\times \mathbb{R})/\sim$: $\alpha_{m}=\sin(\phi(t))dx+\cos(\phi(t))dy,$
with $m\in \mathbb{Z}^{+}$,  $\phi'(t)>0$, $2m\pi\leq \sup_{t\in R}(\phi(t+1)-\phi(t))<(2m+1)\pi$, and $\ker\alpha_m$ is invariant
under the action $(\mathbf{x}, t)\rightarrow (T^{2n}\mathbf{x}, t-1)$. See the second paragraph in page 99 of \cite{H} or Section 1.  If $\xi$ is a universally tight contact structure on $M_n$ with minimal twisting in the $S^1$-direction given by $\mu'$ if $\mu'$ is odd, or by $(\mu', \pm)$ if $\mu'$ is even, where $\mu'$ is a positive integer, then the lift of $\xi$ to $\widetilde{M_n}$ is a universally tight contact structure on $\widetilde{M_n}$ with minimal twisting in the $S^1$-direction given by $(\mu', l')$ for some integer $l'$, which is contactomorphic to a universally tight contact structure on $\widetilde{M_n}$ with $\beta_{S^{1}}\ge 2\pi$.

Since $n<0$,  it is straightforward to check that a universally tight contact structure on $\widetilde{M_n}$ with
$\beta_{S^1}\ge 2\pi$ has positive Giroux torsion.  So the lemma follows from \cite[Corollary 3]{Ga3}.
\end{proof}

Let $c:H_1(N;\Z)\to\Z$ denote a homomorphism which sends a generator of the $\Z$-summand of $H_1(N;\Z)$ to a
generator of $\Z$. Let $\tilde{N}$ denote the double cover of $N$ corresponding to the epimorphism $$\psi:\pi_1(N)\stackrel{h}{\to} H_1(N;\Z)\stackrel{c}{\to}\Z\stackrel{\beta}{\to}\Z_2,$$ where $h$ denotes the Hurewicz homomorphism. Since $N$ is Stein, the homomorphism $j:\pi_1(M_n)\to\pi_1(N)$ induced by inclusion is surjective. Thus the boundary of $\tilde{N}$, $\partial\tilde{N}$, is a double cover of $M_n$ corresponding to the epimorphism $\psi\circ j:\pi_1(M_n)\to\Z_2$.

\begin{lemma} \label{doublecover2}
$\psi\circ j=\phi$.
\end{lemma}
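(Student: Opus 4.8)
The plan is to prove $\psi\circ j=\phi$ by showing that both epimorphisms $\pi_1(M_n)\to\Z_2$ factor through the same map, namely the composition $\pi_1(M_n)\xrightarrow{pr}\pi_1(S^1)\cong\Z\xrightarrow{\beta}\Z_2$. Since both $\psi\circ j$ and $\phi$ are homomorphisms to the group $\Z_2$ with only two elements, it suffices to check that they agree on a generating set of $\pi_1(M_n)$, or equivalently that they have the same kernel (a codimension-one subgroup). Because both maps are surjective (so neither is trivial), two homomorphisms to $\Z_2$ coincide exactly when they send the same elements to $0$; thus I only need to compare where each map sends a convenient set of generators.

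First I would fix explicit generators of $\pi_1(M_n)$. Since $M_n$ is a $T^2$-bundle over $S^1$ with monodromy $-T^n$, its fundamental group is a semidirect product $\Z^2\rtimes\Z$, generated by the classes $[\mu_0]$, $[\lambda_0]$ coming from the fiber $T^2$ together with a section class $\sigma$ projecting to the generator of $\pi_1(S^1)$. By definition of $\phi$, the fiber classes $[\mu_0],[\lambda_0]$ lie in $\ker(pr)$ and hence in $\ker\phi$, while $\phi(\sigma)$ is the generator of $\Z_2$. So the entire computation reduces to evaluating $\psi\circ j$ on these three generators and checking it agrees: the fiber classes should map to $0$ and the section class should map to the nontrivial element.

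To evaluate $\psi\circ j$ I would trace through its definition $\pi_1(N)\xrightarrow{h}H_1(N;\Z)\xrightarrow{c}\Z\xrightarrow{\beta}\Z_2$ pulled back along $j$. Using the Stein/Weinstein handle description of $N$ from Section~4, I would identify $H_1(N;\Z)\cong\Z\oplus\Z_2$ explicitly in terms of the images of the fiber curves and the section, reading off the attaching data: the $2$-handle is attached along the stabilized knot $K'_0$, so in $H_1$ the relation it imposes kills (a multiple of) one fiber class, forcing the $\Z_2$-summand to be generated by a fiber class and the free $\Z$-summand to be detected by the section $\sigma$. Concretely, under $j$ the fiber classes $[\mu_0],[\lambda_0]$ map into the torsion $\Z_2$-part (or are otherwise killed by $c$, which projects onto the free summand), so $c\circ h\circ j$ sends them to $0$, hence $\psi\circ j([\mu_0])=\psi\circ j([\lambda_0])=0$; and $\sigma$ maps to a generator of the $\Z$-summand, so $c\circ h\circ j(\sigma)$ is a generator of $\Z$ and $\psi\circ j(\sigma)=\beta(\mathrm{generator})$ is the nontrivial element of $\Z_2$. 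This matches $\phi$ on all generators, giving $\psi\circ j=\phi$.

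The main obstacle I expect is the bookkeeping in the previous paragraph: pinning down exactly how the abelianized fiber and section classes sit inside $H_1(N;\Z)\cong\Z\oplus\Z_2$ relative to the chosen homomorphism $c$, and verifying that $c$ indeed annihilates the torsion part while detecting the section. This requires carefully tracking the handle attachment along $K'_0$ (and its framing $n+1$) through the Hurewicz map, and using surjectivity of $j$ (which holds because $N$ is built from $M_n\times[0,1]$ by attaching only handles of index $\ge 2$, as noted in the text) to guarantee that the generators of $\pi_1(M_n)$ actually surject onto $\pi_1(N)$ so that comparing on them is legitimate. Once the $\Z\oplus\Z_2$ decomposition is matched up with the geometric generators, the equality $\psi\circ j=\phi$ follows immediately since both are determined by their values on this generating set.
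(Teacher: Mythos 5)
Your overall reduction --- pass to homology and compare the two maps on the fiber classes $[\mu_0],[\lambda_0]$ and a section class $\sigma$ --- is sound, and all the facts you assert are true; but the decisive step is left as an unexecuted (and harder than you suggest) computation, where the paper closes it with a soft argument you never invoke. You propose to determine how $j_0([\mu_0])$, $j_0([\lambda_0])$ and $j_0(\sigma)$ sit inside $H_1(N;\Z)\cong\Z\oplus\Z_2$ by ``reading off the attaching data'' of the handle along $K_0'$; this is not done, and doing it honestly is nontrivial: $N$ is built from $B^4$ (not from $M_n\times[0,1]$) by attaching Weinstein $2$-handles along the whole Legendrian link, so $H_1(N;\Z)$ is presented by meridians and the linking matrix, and one must first express the fiber and section classes of $M_n$ in terms of those meridians --- a genuine Kirby-calculus identification that your sketch glosses over. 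You yourself flag this bookkeeping as ``the main obstacle,'' which is an accurate self-assessment: as written, the pivotal claims (fiber classes killed by $c$, $\sigma$ hitting an odd multiple of the generator) are asserted, not proved.

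The missing idea is that none of this bookkeeping is needed. The fiber classes are \emph{torsion} in $H_1(M_n;\Z)$ (since $\det(-T^n-I)=4\neq 0$; the group is $\Z\oplus\Z_4$ or $\Z\oplus\Z_2\oplus\Z_2$), so any homomorphism to $\Z$ --- in particular $c\circ j_0$ and $p_0$ --- automatically annihilates them; no handle analysis required. For the section class, the point is surjectivity: since $N$ is Stein, $j$ is surjective, hence so is $j_0$ by naturality of Hurewicz ($h\circ j=j_0\circ h_0$), hence $c\circ j_0:H_1(M_n;\Z)\to\Z$ is surjective; a surjective homomorphism to $\Z$ killing torsion must send the free generator to $\pm 1$, which after composing with $\beta$ is the generator of $\Z_2$ regardless of sign. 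The same reasoning applies to $p_0$, so $\beta\circ c\circ j_0=\beta\circ p_0$ and the lemma follows --- this is exactly the paper's proof. Note also that you invoke surjectivity of $j$ for the wrong purpose: it is irrelevant to the legitimacy of checking two homomorphisms out of $\pi_1(M_n)$ on generators of $\pi_1(M_n)$ (that is always legitimate), but it is precisely what pins down the value of $c\circ j_0$ on $\sigma$ without any diagram chase. With these two observations your argument closes; without them it has a real hole at its center.
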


\begin{proof}
Note that $h\circ j=j_0\circ h_0$, where $h_0:\pi_1(M_n)\to H_1(M_n;\Z)$ is the Hurewicz homomorphism and $j_0:H_1(M_n;\Z)\to H_1(N;\Z)$ is induced by inclusion. Thus $\psi\circ j=\beta\circ c\circ h\circ j=\beta\circ c\circ j_0\circ h_0$. If $n$ is odd, then $H_1(M_n;\Z)\cong\Z\oplus\Z_4$. If $n$ is even, then $H_1(M_n;\Z)\cong\Z\oplus\Z_2\oplus\Z_2$. The homomorphism $c\circ j_0:H_1(M_n;\Z)\to\Z$
sends a torsion element of $H_1(M_n;\Z)$ to $0$. Since $c\circ j_0$ is surjective, it sends a generator of the $\Z$-summand of $H_1(M_n;\Z)$ to a generator of $\Z$.  Note that $\phi=\beta\circ pr=\beta\circ p_0\circ h_0$, where $p_0:H_1(M_n;\Z)\to H_1(S^1;\Z)\cong\Z$ is induced by the projection of the $T^2$-bundle. $p_0$ sends a torsion element of $H_1(M_n;\Z)$ to $0$.
Since $p_0$ is surjective, it sends a generator of the $\Z$-summand of $H_1(M_n;\Z)$ to a generator of $\Z$. Thus $\beta\circ p_0=\beta\circ c\circ j_0$.
Hence $\psi\circ j=\beta\circ c\circ j_0\circ h_0=\beta\circ p_0\circ h_0=\phi$.
\end{proof}

By Lemma~\ref{doublecover2}, $\partial\tilde{N}=\widetilde{M_n}$. Lift the contact structure $\xi$ to $\partial\tilde{N}$ and denote the resulting contact structure by $\tilde{\xi}$. Since the Stein structure on $N$ lifts to $\tilde{N}$, $\tilde{\xi}$ is  a Stein fillable contact structure on $\widetilde{M_n}$. By Lemma~\ref{doublecover1}, $\xi$ is  not universally tight. It follows that $\xi$ is just the virtually overtwisted contact structure $\xi_n'$ and $\xi_n'$ is Stein fillable.

\section{Circular spherical symplectic divisors}

Let $(X,\omega)$ be a closed symplectic $4$-manifold obtained as a symplectic blowup of $\mathbb{CP}^2$ with the standard K\"ahler form. For a circular, spherical symplectic divisor $D=C_1\cup\cdots\cup
C_l\subset X$ ($l\ge 2$), define $e_i=C_i\cdot C_i,i=1,\ldots,l$. The boundary of a closed regular neighborhood of $D$ is $M_A$, a torus bundle over $S^1$, with $A=A(-e_1,\ldots,-e_l)$ (see the proof of \cite[Theorem 6.1]{N}).

Now we start the proof of Proposition~\ref{universally tight}. Assume that $e_i\in\{ 0,1\}$ for some $i\in\{ 1,\ldots,l\}$. Then the intersection matrix of $D$ is not negative definite. Since the intersection matrix of $D$ is nonsingular, we can apply \cite[Theorem 2.1]{GoLi1} to see that there is a closed regular neighborhood $V$ of $D$ and
a deformation $\omega_1$ of $\omega$ such that $\partial V$ is a concave boundary of $(V,\omega_1)$. Denote the induced contact structure on $\partial V=M_A$ by $\xi$. The contact manifold $(-M_A,\xi )$ admits a strong symplectic filling $P_0$ given by the complement of $\mathrm{Int}(V)$.

The intersection matrix of $D$ is nonsingular implies that $b_1(M_A)=1$ (see the proof of \cite[Theorem 2.5]{GoLi1}). Hence
$\mathrm{tr}(A)\neq 2$.

\begin{lemma} \label{no filling}
$(-M_A,\xi )$ cannot admit a strong symplectic filling $P$ with $b_1(P)=1$.
\end{lemma}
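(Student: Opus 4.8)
The statement to prove is Lemma 4.4: the contact manifold $(-M_A,\xi)$, arising as the concave boundary of a neighborhood of a circular spherical symplectic divisor $D$ in a rational surface $X$, cannot admit a strong symplectic filling $P$ with $b_1(P)=1$. The natural strategy is proof by contradiction combined with the gluing/capping philosophy already used throughout Section 3. Suppose such a filling $P$ exists. Then I would glue $P$ to the concave filling $V$ (the neighborhood of $D$) along the common boundary, producing a closed symplectic $4$-manifold $Z=P\cup_{M_A}V$. Since $X=\mathrm{Int}(V)\cup P_0$ is already rational (a blowup of $\mathbb{CP}^2$), and $V$ is the same concave cap in both constructions, the point is to compare $Z$ with $X$ and derive a numerical contradiction from the assumed $b_1(P)=1$.

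**Key steps.** First I would record the homological constraints. We are told the intersection matrix of $D$ is nonsingular, so $b_1(M_A)=1$ and $\mathrm{tr}(A)\neq 2$; moreover $A=A(-e_1,\ldots,-e_l)$ is (after the hypothesis $e_i\in\{0,1\}$ for some $i$, giving non-negative-definite intersection form) such that $M_A$ is a negative parabolic or negative hyperbolic torus bundle with its universally tight $\xi_A$. Next I would use a Mayer–Vietoris / Euler-characteristic computation for the closed manifold $Z=P\cup V$: since $M_A$ is a torus bundle, $\chi(M_A)=0$ and the signature of $M_A$ vanishes, so $\chi(Z)=\chi(P)+\chi(V)$ and $\sigma(Z)=\sigma(P)+\sigma(V)$ by Novikov additivity. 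The hypothesis $b_1(P)=1$ pins down $\chi(P)$ and, together with $b_1(M_A)=1$ and the long exact sequence of the pair $(Z,P)$, forces specific values on the Betti numbers of $Z$. The crux is then to invoke the classification of symplectic fillings already available in the paper: by Proposition 2.4 (Etnyre's minimality result) one may assume $P$ is minimal, and by the Golla–Lisca uniqueness theorem \cite[Theorem 3.1]{GoLi1} (or its $b_2^-$ computation via Lemma~\ref{b2} as used in Section 3) the minimal filling of this particular contact torus bundle is rigidly constrained, having $b_1=0$. That directly contradicts $b_1(P)=1$.

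**The main obstacle.** The delicate part is the homological bookkeeping that converts ``$b_1(P)=1$'' into a contradiction with the known structure of the minimal filling, because $M_A$ has $b_1=1$ and so the map $H_1(M_A)\to H_1(P)$ can be an isomorphism on the free part; I must check that the generator of $H_1(P)$ is carried by the $S^1$-direction of the torus bundle and is \emph{not} killed in $P$, which is exactly what makes $Z$ fail to be simply connected or fail to match the rational surface $X$. Concretely, the plan is to run the Mayer–Vietoris sequence for $Z=P\cup_{M_A} V$ and show that $b_1(P)=1$ produces $b_1(Z)\geq 1$, whereas the cap $V$ together with any minimal filling must reassemble into a rational (hence simply connected, $b_1=0$) closed surface as in \cite[Theorem 3.1]{GoLi1} and \cite{LM}; this is the contradiction. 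I expect the diffeomorphism-type identification of the glued manifold and the verification that the $S^1$-class survives in $P$ to be where the real work lies, while the signature and Euler-characteristic additivity are routine via Novikov additivity and $\chi(M_A)=0$.
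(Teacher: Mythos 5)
Your overall architecture---glue the hypothetical filling $P$ to the concave neighborhood $V$ along $M_A$ and derive a homological contradiction via Mayer--Vietoris---is the same as the paper's, but the step that is supposed to deliver the contradiction has a genuine gap. You claim that the closed symplectic manifold $Z=P\cup V$ ``must reassemble into a rational (hence $b_1=0$) surface as in \cite[Theorem 3.1]{GoLi1}.'' That theorem classifies minimal fillings only of the specific contact torus bundles $(M_{-A(d_0)},\xi_{-A(d_0)})$ with $d_0=(3,2,\ldots,2)$, whereas in this lemma the monodromy $A$ may be elliptic, parabolic, or hyperbolic of either sign, and---more importantly---the contact structure $\xi$ induced on $\partial V$ has not yet been identified: identifying it as universally tight is the very purpose of Proposition~\ref{universally tight}, so invoking any classification of the fillings of $\xi$ at this stage is circular. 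Your opening assertion that $M_A$ is negative parabolic or negative hyperbolic with $\xi=\xi_A$ is unjustified for the same reason. Passing to a minimal filling does not help either (blowing down preserves $b_1$, but no classification of minimal fillings of the unknown $\xi$ is available).

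The missing ingredient is McDuff's theorem: since the divisor $D\subset Z$ contains a symplectic sphere $C_i$ with $C_i\cdot C_i\in\{0,1\}$, \cite[Theorems 1.1 and 1.4]{Mc} show that $Z$ is rational or \emph{ruled}. Crucially, this yields only that $b_1(Z)$ is \emph{even} (a ruled surface over a genus-$g$ surface has $b_1=2g$), not that $b_1(Z)=0$. Consequently your Mayer--Vietoris estimate $b_1(Z)\ge 1$ is insufficient: $b_1(Z)=2$ is perfectly consistent with $Z$ being ruled. One must compute $b_1(Z)=1$ exactly, and this is where the surjectivity of $i_*\colon H_1(M_A;\Z)\to H_1(V;\Z)\cong\Z$ enters (you correctly flag this as the delicate point but leave it unresolved): surjectivity makes the image of $H_1(M_A;\R)$ in $H_1(V;\R)\oplus H_1(P;\R)$ nonzero, so the Mayer--Vietoris sequence with $b_1(M_A)=b_1(V)=b_1(P)=1$ forces $b_1(Z)=1$, which is odd and contradicts the McDuff parity constraint. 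With \cite{Mc} in place of \cite[Theorem 3.1]{GoLi1} and the exact computation $b_1(Z)=1$ in place of the inequality, your outline becomes the paper's proof; as written, it does not close, because the rationality claim is unsupported in this generality and the inequality $b_1(Z)\ge 1$ does not exclude ruled surfaces.
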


\begin{proof} Suppose that $(-M_A,\xi )$ admits a strong symplectic filling $P$ with $b_1(P)=1$. By \cite[Theorems 1.1 and 1.4]{Mc}, $V\cup P$ is rational or ruled. So $b_1(V\cup P)$ is even. By the Mayer-Vietoris sequence of $(V,P)$, the fact that $i_{\ast}:H_1(M_A;\Z)\to H_1 (V;\Z)\cong\Z$ induced by inclusion is surjective and $b_1(V)=b_1(P)=1$, we conclude that $b_1(V\cup P)=1$, a
contradiction.
\end{proof}

We prove Proposition~\ref{universally tight} by a case by case argument.

If $A$ is elliptic and $\xi$ is not universally tight, then $\xi$ is  one of the three virtually overtwisted contact structures listed in elliptic
case of \cite[Theorem 0.1]{H} (one for $A^{-1}$ conjugate to $S$, two for $A^{-1}$ conjugate to $(T^{-1}S)^2$). By
\cite[Theorem 1.1]{EtH1}, these three contact structures are not weakly symplectically semi-fillable, contradicting the fact that $(-M_A,\xi )$ admits the strong symplectic filling $P_0$.

If $A$ is hyperbolic with $\mathrm{tr}(A)>2$, since $(-M_A,\xi)$ is strongly symplectically fillable, it is a tight contact structure which is minimally twisting in
the $S^1$-direction. Therefore there is a Stein filling $P$ of $(-M_A,\xi)$ with $b_1(P)=1$(see the proof of \cite[Proposition 11]{BO}). This contradicts Lemma~\ref{no filling}.

If $A$ is hyperbolic with $\mathrm{tr}(A)<-2$ and $(-M_A,\xi)$ is not universally tight, then it is a tight contact structure which is minimally twisting in the
$S^1$-dirction. By \cite[Lemma 4.3]{GoLi1}, there is a Stein filling $P$ of $(-M_A,\xi)$ with $b_1(P)=1$, contradicting
Lemma~\ref{no filling}.

If $A$ is parabolic with $\mathrm{tr}(A)=-2$ and $(-M_A,\xi)$ is not universally tight, then it is the virtually overtwisted
contact structure in the preceding section by Honda's classification \cite{H}. By  the preceding section, it has a Stein filling $P$ with $b_1(P)=1$, contradicting Lemma~\ref{no filling}.

This ends the proof of Proposition~\ref{universally tight}.

\end{document}